\def\complex{\mathbb{C}}
\def\bilin#1#2{\left\langle#1,\,#2\right\rangle}
 \newtheorem{theorem}{Theorem}[section]
 \newtheorem{proposition}[theorem]{Proposition}
 \newtheorem{corollary}[theorem]{Corollary}
 \newtheorem{lemma}[theorem]{Lemma}
 \newtheorem{example}[theorem]{Example}
 \newtheorem{remark}[theorem]{Remark}
\numberwithin{equation}{section}
\newenvironment{proof}{\smallskip\par{\sc Proof.}\enspace}%
 {{\unskip\nobreak\hfil\penalty50\hskip2em
          \hbox{}\nobreak\hfil{\rule[-1pt]{5pt}{10pt}}
          \parfillskip=0pt\finalhyphendemerits=0
          \par\medskip}} 
\begin{document}

\vspace*{.3in}

\begin{center}
\LARGE
{\sf Convergences of Alternating Projections \\ \medskip
in ${\rm CAT}(\kappa)$ Spaces}
\end{center}

\bigskip
\begin{center}
Byoung Jin Choi\\
Department of Mathematics\\
Sungkyunkwan University\\
Suwon 16419, Korea\\
\texttt{E-Mail: choibj@skku.edu}\\ \medskip

Un Cig Ji\\
Department of Mathematics \\
Research Institute of Mathematical Finance \\
Chungbuk National University \\
Cheongju 28644, Korea \\
\texttt{E-Mail: uncigji@chungbuk.ac.kr}\\ \medskip
and \\[5pt]
Yongdo Lim\\
Department of Mathematics\\
Sungkyunkwan University\\
Suwon 16419, Korea\\
\texttt{E-Mail:ylim@skku.edu}\\
\end{center}

\begin{abstract}
We establish the asymptotic regularity and the $\Delta$-convergence
of the sequence constructed by the alternating projections to closed
convex sets in a ${\rm CAT}(\kappa)$ space with $\kappa > 0$.
Furthermore, the strong convergence of the alternating von Neumann
sequence is presented under  certain regularity or compactness.
\end{abstract}

\bigskip
\noindent
{\bf Mathematics Subject Classifications (2010):} 47J25, 47A46, 53C23 

\bigskip
\noindent {\bfseries Keywords:} ${\rm CAT}(\kappa)$ space,
alternating projection method, alternating von Neumann sequence,
asymptotic regularity, $\Delta$-convergence.
\section{Introduction}

Alternating projection algorithm is one of the most simple and important algorithm
for computing a point in the intersection of some convex sets, which is called the convex feasibility problem.
More precisely, for given two closed convex subsets $A$ and $B$ with the corresponding projections $P_A$ and $P_B$,
the \textit{alternating projection method} produces the sequence:
\begin{equation*}
b_0=x_0,
\quad
a_n=P_{A}(b_{n-1}),\quad b_{n}=P_{B}(a_{n}),\qquad n \in \mathbb{N},
\end{equation*}
where $x_0$ is a given starting point.
The alternating projection algorithm for the case of
closed subspaces $A$ and $B$ of a Hilbert space
was introduced by von Neumann \cite{Neumann50},
and so it is also called the von Neumann's alternating projection algorithm.
In this case, the sequences $\{a_n\}$ and $\{b_n\}$
are referred to as von Neumann sequences,
and $\{b_0,a_1,b_1,a_2,b_2,\cdots\}$ is referred to as an alternating von Neumann sequence,
which is denoted by the sequence $\{x_n\}_{n=0}^\infty$, i.e.,
\begin{equation}\label{eqn:alternating projections x-n}
x_0=b_0,
\quad x_{2m}=b_m=P_{B}(a_m),\quad x_{2m-1}=a_m=P_{A}(b_{m-1}),\qquad m \in \mathbb{N}.
\end{equation}
In 1933, von Neumann proved that the alternating projections
defined as in \eqref{eqn:alternating projections x-n}
converges in norm to $P_{A\cap B} (x_0)$,
when $A$ and $B$ are closed subspaces of a Hilbert space
(see \cite{Neumann50,BMR04}).
In \cite{Bregman65}, Br\`{e}gman proved that the alternating projections
defined as in \eqref{eqn:alternating projections x-n}
converges weakly to a point in the intersection of closed convex sets of a Hilbert space,
if the intersection is non-empty.
In \cite{Hundal04}, Hundal proved that the alternating projections
between two closed convex intersecting sets does not always converge in norm,
by providing a sequence of alternating projections which converges weakly,
but does not converges in norm.

The convex feasibility problem has been studied by many authors,
e.g.,
\cite{Neumann50,Bregman65,BB96,Combettes97,BMR04,Hundal04,BSS12,ARLAN15}
and references cited therein. In particular, Bauschke and Borwein
\cite{BB96} studied the alternating projection algorithm to solve
the convex feasibility problem. In \cite{BB96}, we also can find
several applications of the alternating projection algorithm to the
best approximation theory,  discrete and continuous image
reconstruction, and subgradient algorithms.
In \cite{Zarikian2006}, Zarikian used the alternating projection algorithm to
solve a variety of operator-theoretic
problems, e.g., deciding complete positivity, computing completely bounded norms, etc.
By many authors, the
alternating projection method has been extended to general metric
spaces for solving the convex feasibility problem. In \cite{BSS12},
Ba\v{c}\'{a}k et al. studied the sequence of alternating projections
in a ${\rm CAT}(0)$ space or alternatively Hadamard space and then
proved that the sequence converges weakly (equivalently,
$\Delta$-converges) to a point in the intersection of closed convex
subsets of the CAT$(0)$ space.

Recently, in \cite{ARLAN15},
by developing unified treatment of convex minimization problems,
Ariza-Ruiz et al. studied the asymptotic behavior of the sequence
constructed by the iterative method for a finite family of firmly nonexpansive maps
in the setting of $p$-uniformly convex spaces,
and then the asymptotic regularity has been applied to study the common fixed points
of the finite family of firmly nonexpansive maps and also to study the  convex feasibility problem.
In \cite{ARLAN15}, the ${\rm CAT}(0)$ spaces and ${\rm CAT}(\kappa)$ spaces with $\kappa>0$
were considered as examples of $p$-uniformly convex spaces.

The notion of the ${\rm CAT}(\kappa)$ spaces \cite{Bhatia07,BH99},
a generalization of Riemannian manifolds of sectional curvature bounded above,
has been introduced by Gromov \cite{Gromov87},
where ${\rm CAT}$ stands for Cartan-Alexandrov-Toponogov.
The group of unitary matrices and the $n$-dimensional sphere have nonnegative sectional curvature (see \cite{Bhatia07}),
indeed, the group $SU(2)$ is a ${\rm CAT}(1)$ space (see Example \ref{example:n-dimensional sphere and SU2 is cat 1 space} (ii)).
Also, the vector (pure) state space of the space of $2 \times 2$ matrices $M_{2}(\mathbb{C})$
is a ${\rm CAT}(1)$ space (see Example \ref{example:vector state space is cat 1 space}).
Every ${\rm CAT}(0)$ space is ${\rm CAT}(\kappa)$ space for all $\kappa > 0$,
indeed, a ${\rm CAT}(\kappa)$ space is a ${\rm CAT}(\kappa')$ space for all $\kappa, \kappa' \in \mathbb{R}$ with $ \kappa \le \kappa'$,
and Hilbert spaces and classical hyperbolic spaces are
typical examples of ${\rm CAT}(0)$ spaces.
Therefore, the study of ${\rm CAT}(\kappa)$ spaces is getting more interesting
to overcome several difficulties appearing in the estimation theory
by using geometric structures.

Main purpose of this paper is to study the sequence constructed by
the alternating projection method in ${\rm CAT}(\kappa)$ spaces with $\kappa > 0$.
Then we prove the asymptotic regularity and the $\Delta$-convergence of the alternating von Neumann sequence.
Also, we prove the strong convergence of the sequence
under certain regularity or compactness condition on closed convex sets.
By comparing the results in \cite{ARLAN15},
it is emphasized that the main results in this paper
are proved without any non-expensiveness condition.

This paper is organized as follows.
In Section 2, we review briefly the basic notions in the setting of ${\rm CAT}(\kappa)$ spaces
and the $\Delta$-convergence in ${\rm CAT}(\kappa)$ spaces.
In Section 3, we first prove the asymptotic regularity of the sequence
constructed by the alternating projection method (Theorem \ref{thm:rate of asymptotic regularity of the alternating projection method}),
and secondly we prove the $\Delta$-convergence of the alternating von Neumann sequence
in ${\rm CAT}(\kappa)$ spaces (Theorem \ref{thm:convergence of Al.pro}).
Furthermore, we prove the strong convergence of the alternating von Neumann sequence
by assuming certain regularity or compactness condition on convex sets
(Corollary \ref{coro:strong convergence of Al.pro}).

\section{Preliminary}

\subsection{CAT$(\kappa)$ spaces}

Let $(M, d)$ be a metric space.
A \textit{geodesic (path)} joining $x\in M$ and $y \in M$ is a map $\gamma:[0,1]\rightarrow M$
satisfying that $\gamma(0)=x$, $\gamma(1)=y$ and
$d(\gamma(t_1),\gamma(t_2))=d(x,y)|t_1-t_2|$ for all $t_1, t_2 \in [0,1]$.
The image of the geodesic $\gamma$ joining $x$ and $y$ is called a \textit{geodesic segment} joining $x$ and $y$.
If for any $x,y\in M$, there exists a unique geodesic joining $x$ and $y$,
then the unique geodesic is denoted by $[x,y]$.

A metric space $(M,d)$ is called a \textit{geodesic space} (\textit{$D$-geodesic space})
if for any two points $x,y \in M$ (for any two points $x,y\in M$ with $d(x,y)<D$),
there exists a geodesic $\gamma$ joining $x$ and $y$.
A subset $C$ of $M$ is said to be \textit{convex} if
\begin{itemize}
  \item [\textbf{(C)}] any two points $x,y \in C$ can be joined by a geodesic in $M$
and the geodesic segment of every such geodesic is contained in $C$.
\end{itemize}

\begin{remark}
\upshape
In some literature,
a subset $C$ of $M$ satisfying the condition \textbf{(C)}
is said to be totally convex.
In this case, a subset $C$ of $M$ is said to be convex
if any two points of $C$ are joined by a geodesic belonging to $M$.
\end{remark}

The $n$-dimensional sphere $\mathbb{S}^n$ is the set
\[
\{x=(x_1,x_2,\cdots,x_{n+1})\in \mathbb{R}^{n+1}\,|\,\bilin{x}{x}=1\},
\]
where $\bilin{\cdot}{\cdot}$ is the Euclidean scalar product.
Let $\rho: \mathbb{S}^n \times \mathbb{S}^n \rightarrow \mathbb{R}$ be the function that
assigns to each pair $(x,y) \in \mathbb{S}^n \times \mathbb{S}^n$ the unique real number $\rho(x,y) \in [0,\pi]$ such that
\[
\cos \rho(x,y)=\bilin{x}{y}.
\]
Then it is well known fact that $(\mathbb{S}^n,\rho)$ is a geodesic metric space, and
if $\rho(x,y) <\pi$ for $x,y \in \mathbb{S}^n$,
then there is only one geodesic segment joining $x$ and $y$.

From now on, we always assume that $\kappa > 0$, and put $D_{\kappa}:=\pi/\sqrt{\kappa}$.
Then the \textit{model space} $M_{\kappa}^{n}$ is the metric space obtained from $(\mathbb{S}^{n},\rho)$
by multiplying the distance function by the constant $1/\sqrt{\kappa}$.
We use the same symbol $\rho$ for the distance function of $M_{\kappa}^{n}$.
Then it is clear that $M_{\kappa}^{n}$  is a geodesic metric space.
Note that there is a unique geodesic segment joining $x,y \in M_{\kappa}^{n}$ if and only if $\rho(x,y)<D_{\kappa}$.

Let $(M,d)$ be a geodesic metric space.
A \textit{geodesic triangle} $\Delta:=\Delta(x,y,z)$ in the metric space $M$
consists of three points in $M$ and three geodesic segments joining each pair of points.
For a geodesic triangle $\Delta=\Delta(x,y,z) \subseteq M$,
a geodesic triangle $\overline{\Delta}=\Delta(\overline{x},\overline{y},\overline{z}) \subseteq M_{\kappa}^{2}$
is called a \textit{comparison triangle} for $\Delta$ if
\[
d(x,y)=\rho(\overline{x},\overline{y}),\quad d(x,z)=\rho(\overline{x},\overline{z})\quad \text{and}
\quad d(y,z)=\rho(\overline{y},\overline{z}).
\]
A point $\overline{p} \in [\overline{x},\overline{y}] \subseteq \overline{\Delta}$
is called a \textit{comparison point} for $p \in [x,y] \subseteq \Delta$ if
$d(p,x)=\rho(\overline{p},\overline{x})$.
Note that for a geodesic triangle $\Delta=\Delta(x,y,z) \subseteq M$,
if the perimeter of $\Delta$ is (strictly) less than $2D_k$, i.e., $d(x,y)+d(y,z)+d(z,x) < 2 D_{\kappa}$,
then a comparison triangle for $\Delta$ always exists (see \cite{BH99}).
For a geodesic triangle $\Delta=\Delta(x,y,z) \subseteq M$ of perimeter less than $2D_k$,
we say that $\triangle$ satisfies \textit{${\rm CAT}(\kappa)$ inequality}
if for any $p,q\in\triangle$ and their comparison points $\overline{p},\overline{q}\in\overline{\triangle}$,
it holds that
\begin{equation}\label{eqn:CAT(kappa)--inequality}
d(p, q)\leq \rho(\overline{p},\overline{q}).
\end{equation}

A metric space $(M,d)$ is called a \textit{CAT$(\kappa)$ space}
if $(M,d)$ is a $D_k$-geodesic space and all geodesic triangles in $M$ of perimeter less than $2D_{\kappa}$
satisfy the ${\rm CAT}(\kappa)$ inequality.

For a non-empty subset $C$ of a metric space $(M, d)$,
the \textit{diameter} of $C$ is defined by
\begin{align*}
{\rm diam}(C)&=\sup\{d(x,y)\,;\,x,y\in C\}.
\end{align*}
If $(M,d)$ is a ${\rm CAT}(\kappa)$ space with ${\rm diam}(M)<D_{\kappa}/2$,
then for any geodesic $\gamma:[0,1]\rightarrow M$
with $\gamma(0)=x$ and $\gamma(1)=y$, any $z\in M$ and $t \in [0,1]$,
there exists a constant $c_{M} \in (0,1)$ such that the following inequality holds:
\begin{equation}\label{eqn:convexity for CAt(k) space}
d(z,\gamma(t))^2 \le (1-t)d(z,x)^2+t d(z,y)^2-c_{M} t(1-t)d(x,y)^2,
\end{equation}
(see \cite{Kuwae14,Ohta07}).
\begin{example}\label{example:n-dimensional sphere and SU2 is cat 1 space}
\rm
\begin{itemize}
  \item [{\rm(i)}] A complete simply connected Riemannian manifold with constant sectinal curvature $\le \kappa$
  is a ${\rm CAT}(\kappa)$ space: the $n$-dimensional sphere is a ${\rm CAT}(1)$ space (see \cite{Gromov2001}).
  \item [{\rm(ii)}] The group $SU(2)$, consisting of $2 \times 2$ unitary matrices with determinant $1$ is a ${\rm CAT}(1)$ space.
Indeed, there are bijective functions from $SU(2)$ to the $3$-dimensional sphere $\mathbb{S}^3$.
Furthermore, for a compact, simply connected Lie group $G$,
if $G$ admits a left invariant Riemannian structure with strictly positive sectional curvature,
then $G$ is Lie isomorphic with $SU(2)$ (see \cite[Theorem 2.1]{Wallach72}).
\end{itemize}

\end{example}

\begin{example}\label{example:vector state space is cat 1 space}
\rm
Let $H$ be a Hilbert space and $\mathcal{B}(H)$ the space of all bounded linear operators from $H$ into itself.
A linear functional $\tau$ from $\mathcal{B}(H)$ into $\complex$ is said to be \textit{positive}
if $\tau(X^{*}X)\geq0$ for any $X \in \mathcal{B}(H)$.
The linear functioanl $\tau$ is \textit{normalized} if
$\tau(I)=1$, where $I$ is the identity map.
A normalized positive linear functional $\tau$ from $\mathcal{B}(H)$ into
$\complex$ is called a \textit{state}.
A state $\tau$ is said to be \textit{normal} if
\[
\sup_{\alpha}\tau(X_\alpha)=\tau(\sup_{\alpha}X_\alpha)
\]
for any positive bounded net $\{X_\alpha\}$.
In particular, for each unit vector $\xi$, the state $\tau_{\xi}: \mathcal{B}(H)\ni X\mapsto \bilin{X\xi}{\xi}\in \mathbb{C}$ is said to be the
\textit{vector state} on $\mathcal{B}(H)$ determined by $\xi$.
Let $\mathcal{S}$ be the space of all normal states on $\mathcal{B}(H)$.
Note that the extreme points $\partial\mathcal{S}$ of $\mathcal{S}$ consists of all
vector states $\tau_{\xi}$ with unit vector $\xi$ in $H$.
If ${\mathrm{dim}}H=2$, then $\mathcal{B}(H)$ becomes the vector space of all $2 \times 2$ matrixes $M_{2}(\mathbb{C})$.
Also we can identify the normal state space $\mathcal{S}$ of $M_{2}(\mathbb{C})$
with the convex set of all positive trace one matrix in $M_{2}(\mathbb{C})$,
and we can identify $\partial\mathcal{S}$ with $\mathbb{S}^2$ (see \cite{AHS80,AS2001}).
Therefore, the vector (pure) state space of $M_{2}(\mathbb{C})$ is a ${\rm CAT}(1)$ space.
\end{example}

For a non-empty subset $C$ of a metric space $(M, d)$,
the \textit{distance function} of $C$ is defined by
\begin{align*}
d(x,C)&=\inf\{d(x,c)\,;\, c \in C\},\qquad \text{for}\,\, x \in M.
\end{align*}

\begin{proposition}{\rm \cite{BH99,EFL09,ARLAN15}}\label{pro:properties of metric projection}
Let $(M,d)$ be a complete ${\rm CAT}(\kappa)$ space and $x \in M$ be given. Let $C \subseteq M$ be a non-empty closed convex set
such that $d(x,C)<D_{\kappa}/2$.
Then for given $x \in M$, there exists a unique point $P_C(x) \in C$ such that
  \begin{equation}\label{eqn:for def of P-C}
  d(x,P_C(x))=d(x,C).
  \end{equation}
\end{proposition}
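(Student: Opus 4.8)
The plan is to establish existence by extracting a Cauchy minimizing sequence and invoking completeness, and to establish uniqueness by a strict-convexity argument at a midpoint; both halves rest on the quadratic convexity inequality \eqref{eqn:convexity for CAt(k) space} evaluated at the parameter $t=1/2$. First I would set $r:=d(x,C)$ and choose a minimizing sequence $\{c_n\}\subseteq C$ with $d(x,c_n)\to r$. Since $r<D_{\kappa}/2$, I fix a radius $\rho$ with $r<\rho<D_{\kappa}/2$ and discard finitely many terms so that $c_n\in\overline{B}(x,\rho)$ for every $n$. Then $d(c_n,c_m)\le d(c_n,x)+d(x,c_m)\le 2\rho$, so each geodesic triangle $\Delta(x,c_n,c_m)$ has perimeter at most $4\rho<2D_{\kappa}$, and hence admits a comparison triangle in $M_{\kappa}^{2}$ to which the ${\rm CAT}(\kappa)$ inequality \eqref{eqn:CAT(kappa)--inequality} applies. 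Because $d(c_n,c_m)\le 2\rho<D_{\kappa}$, the geodesic $[c_n,c_m]$ is unique and, by convexity of $C$, lies in $C$; in particular its midpoint $m_{n,m}$ belongs to $C$.

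For the Cauchy estimate I would apply \eqref{eqn:convexity for CAt(k) space} with $z=x$, endpoints $c_n,c_m$, and $t=1/2$, obtaining
\[
d(x,m_{n,m})^2 \le \tfrac12\, d(x,c_n)^2 + \tfrac12\, d(x,c_m)^2 - \tfrac{c_M}{4}\, d(c_n,c_m)^2 .
\]
Since $m_{n,m}\in C$ forces $d(x,m_{n,m})\ge r$, rearranging yields $\tfrac{c_M}{4}\,d(c_n,c_m)^2 \le \tfrac12 d(x,c_n)^2 + \tfrac12 d(x,c_m)^2 - r^2$, and the right-hand side tends to $0$ as $n,m\to\infty$. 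Thus $\{c_n\}$ is Cauchy; by completeness it converges to some $p\in M$, and $p\in C$ because $C$ is closed. Continuity of $d(x,\cdot)$ then gives $d(x,p)=r=d(x,C)$, so $p$ realizes the distance and we set $P_C(x):=p$. Uniqueness follows by the same device: if $p,q\in C$ both satisfy $d(x,p)=d(x,q)=r$, then applying the inequality to the midpoint $m$ of $[p,q]\subseteq C$ gives $d(x,m)^2\le r^2-\tfrac{c_M}{4}\,d(p,q)^2$, while $d(x,m)\ge r$ forces $d(p,q)=0$, i.e. $p=q$.

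The main obstacle is the mismatch between the hypothesis and the tool: inequality \eqref{eqn:convexity for CAt(k) space} is stated for a space of diameter $<D_{\kappa}/2$, whereas here we only know $d(x,C)<D_{\kappa}/2$, and the minimizing sequence may live in a ball $\overline{B}(x,\rho)$ whose diameter $2\rho$ approaches $D_{\kappa}$. The crux is therefore to justify that the quadratic convexity estimate, with a modulus $c_M=c(\rho)\in(0,1)$ depending only on $\rho$, still holds for geodesic triangles \emph{based at $x$} whose other two vertices lie in $\overline{B}(x,\rho)$. I would obtain this by passing to the comparison triangle in the model space $M_{\kappa}^{2}$ and verifying the uniform convexity of the squared distance from $\overline{x}$ along geodesics remaining within distance $\rho<D_{\kappa}/2$ of $\overline{x}$; the ${\rm CAT}(\kappa)$ inequality then transfers the estimate back to $M$. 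Controlling the modulus $c(\rho)$ as $\rho\uparrow D_{\kappa}/2$ is the only genuinely technical point, and it is precisely where the hypothesis $d(x,C)<D_{\kappa}/2$ (rather than merely $<D_{\kappa}$) is indispensable.
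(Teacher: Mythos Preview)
The paper does not prove this proposition: it is quoted from \cite{BH99,EFL09,ARLAN15} and used as a black box, so there is no ``paper's own proof'' to compare against. Your argument is the standard one and is essentially correct; the midpoint/Cauchy-sequence route you outline is exactly how this is done in \cite{BH99} (see II.2.7 there) and in \cite{EFL09}.

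The obstacle you flag is genuine and your diagnosis is accurate: inequality \eqref{eqn:convexity for CAt(k) space} as stated in the paper presupposes ${\rm diam}(M)<D_{\kappa}/2$, which is not part of the hypothesis of the proposition. Your proposed fix---push the midpoint estimate down to the comparison triangle in $M_{\kappa}^{2}$, where one can verify directly (via the spherical cosine law) that the squared distance from $\overline{x}$ is uniformly convex along geodesics whose endpoints stay within radius $\rho<D_{\kappa}/2$ of $\overline{x}$, and then pull the bound back via \eqref{eqn:CAT(kappa)--inequality}---is correct and is what the references do. The modulus $c(\rho)$ can be taken, for instance, as $(\pi-2\sqrt{\kappa}\,\rho)\tan(\sqrt{\kappa}\,\rho)/(2\sqrt{\kappa}\,\rho)$ (cf.\ \cite{Ohta07}), which is positive precisely when $\rho<D_{\kappa}/2$; this confirms your remark that the hypothesis $d(x,C)<D_{\kappa}/2$ is sharp.
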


Let $(M,d)$ be a complete ${\rm CAT}(\kappa)$ space with ${\rm diam}(M)<D_{\kappa}/2$
and $C$ be a non-empty closed convex subset of $M$.
Then from Proposition \ref{pro:properties of metric projection}
we define the map
\[
P_{C}:M\ni x\longmapsto P_C(x)\in C,
\]
where for each $x\in M$, $P_C(x)$ is the unique element in $C$ satisfying \eqref{eqn:for def of P-C}.
The map $P_C$ is called the \textit{(metric) projection} onto $C$.
For more detailed study of ${\rm CAT}(\kappa)$ spaces, we refer to \cite{BH99,EFL09}.

Now, we review some notions for $p$-uniformly convex spaces.
Let $1<p<\infty$. Then a metric space $(M,d)$ is called \textit{$p$-uniformly convex} with parameter $c>0$
if $(M,d)$ is a geodesic space and for any $x,y,z\in M$ and $t\in[0,1]$,
\begin{equation}\label{eqn:inequality for p-uniformly convex}
d(z,\gamma_{x,y}(t))^p
\le (1-t)d(z,x)^p+td(z,y)^p-\frac{c}{2}t(1-t)d(x,y)^p,
\end{equation}
where $\gamma_{x,y}$ is a geodesic joining $x$ and $y$ such that $\gamma_{x,y}(0)=x$ and $\gamma_{x,y}(1)=y$.

It is well-known that every $L^p(\Omega,\mu)$ ($1<p<\infty$) over a measure space $(\Omega,\mu)$ is $p$-uniformly convex.
Also, for $\kappa>0$, every ${\rm CAT}(\kappa)$ space is $p$-uniformly convex, see \cite{Ohta07}.

Let $C$ be a non-empty subset of a $p$-uniformly convex space $(M,d)$.
A map $T:C\to X$ is said to be \textit{firmly nonexpansive} if
\begin{equation}\label{eqn:inequality for firmly nonexpansive map}
d(Tx,Ty)
\le d(\gamma_{x,Tx}(t),\gamma_{y,Ty}(t))
\end{equation}
for all $x,y\in C$ and $t\in[0,1)$.
The notion of firmly nonexpansive map has been introduced by Browder \cite{Browder67} (see, also \cite{Bruck73}).
For more study of firmly nonexpansive maps, we refer to \cite{ARLAN15}.

For any non-empty closed convex subset $C$ of a complete ${\rm CAT}(0)$ space $(M,d)$,
the metric projection map $P_C:M\to C\subset M$ is firmly nonexpansive.
But, in general, if $\kappa>0$,
then the metric projection map $P_C$ for a non-empty closed convex subset $C$
of a complete ${\rm CAT}(\kappa)$ space need not be nonexpansive.
However, more interesting examples of firmly nonexpansive map can be found in \cite{ARLAN15}.

Let $C$ be a non-empty subset of a metric space $(M,d)$.
A mapping $T:C\to M$ is said to satisfy property (P1)
if ${\rm Fix}(T)\ne \emptyset$ and there exist $\ell,\beta>0$ such that
\begin{equation}\label{eqn:inequality for property (P1)}
d(Tx,u)^\ell\le d(x,u)^\ell-\beta d(Tx,x)^\ell
\end{equation}
for all $x\in C$ and $u\in{\rm Fix}(T)$ (see Definition 2.5 in \cite{ARLAN15}).

\begin{proposition}{\rm \cite{ARLAN15}}\label{propo:inequality of projection}
Let $(M,d)$ be a complete ${\rm CAT}(\kappa)$ space with ${\rm diam}(M)<D_{\kappa}/2$
and $C \subseteq M$ be a non-empty closed convex set.
Then for all $x \in M$ and $z\in C$, it holds that
\begin{equation}\label{eqn:metric inequality of projection}
d(z,P_{C}(x))^{2}+c_{M}d(x,P_{C}(x))^{2} \le d(x,z)^{2},
\end{equation}
where $c_{M}$ is given as in \eqref{eqn:convexity for CAt(k) space}.
\end{proposition}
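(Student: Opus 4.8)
Writing $p := P_C(x)$, the asserted inequality \eqref{eqn:metric inequality of projection} reads
\[
d(z,p)^2 + c_M\, d(x,p)^2 \le d(x,z)^2,
\]
which is precisely the property (P1)-type estimate \eqref{eqn:inequality for property (P1)} for the map $T=P_C$ with exponent $\ell=2$, constant $\beta=c_M$, and $u=z\in C={\rm Fix}(P_C)$; this is what fixes the coefficient $c_M$ on the $d(x,p)^2$ term rather than on $d(z,p)^2$. The plan is to produce it from the convexity inequality \eqref{eqn:convexity for CAt(k) space} together with the minimality of the projection supplied by Proposition \ref{pro:properties of metric projection}.

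First I would exploit convexity of $C$: since $p,z\in C$, the geodesic $\gamma$ with $\gamma(0)=p$ and $\gamma(1)=z$ has $\gamma(t)\in C$ for every $t\in[0,1]$. Applying \eqref{eqn:convexity for CAt(k) space} with external point $x$ along this segment gives
\[
d(x,\gamma(t))^2 \le (1-t)\,d(x,p)^2 + t\,d(x,z)^2 - c_M\, t(1-t)\, d(p,z)^2
\]
for each $t\in[0,1]$. Since $\gamma(t)\in C$ and $p$ realizes $d(x,C)$, we have $d(x,p)^2=d(x,C)^2\le d(x,\gamma(t))^2$; inserting this lower bound on the left, cancelling one copy of $d(x,p)^2$, dividing by $t>0$, and letting $t\downarrow 0$ yields a clean inequality relating $d(x,p)$, $d(p,z)$ and $d(x,z)$.

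The delicate point --- and what I expect to be the main obstacle --- is the bookkeeping of \emph{which} squared distance the convexity constant $c_M$ ends up multiplying. In \eqref{eqn:convexity for CAt(k) space} the factor $c_M$ is always attached to the squared length of the segment to which the inequality is applied, so the direct substitution above deposits $c_M$ on the endpoint term $d(p,z)^2=d(z,P_C(x))^2$, whereas the statement requires it on $d(x,p)^2=d(x,P_C(x))^2$. One cannot simply rerun the argument along $[x,p]$, since that segment meets $C$ only at $p$ and the minimality step is then unavailable. Instead I would encode the projection by the comparison angle: minimality of $p$ forces the Alexandrov angle $\angle_{p}(x,z)\ge \pi/2$, hence by the ${\rm CAT}(\kappa)$ inequality \eqref{eqn:CAT(kappa)--inequality} the comparison angle $\angle_{\overline{p}}(\overline{x},\overline{z})\ge \pi/2$ in the model triangle $\overline{\Delta}(\overline{x},\overline{p},\overline{z})\subseteq M_\kappa^2$. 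The spherical law of cosines in $M_\kappa^2$ then controls the gap between $d(x,z)^2$ and $d(x,p)^2+d(z,p)^2$, and the uniform bound ${\rm diam}(M)<D_\kappa/2$ --- the very hypothesis that produces the constant $c_M$ in \eqref{eqn:convexity for CAt(k) space} --- is what converts this trigonometric relation into the squared-distance form with $c_M$ sitting on $d(x,P_C(x))^2$. Checking that the constant emerging from this angle computation is exactly the $c_M$ of \eqref{eqn:convexity for CAt(k) space}, and not merely some comparable curvature-dependent factor, is the step that will require the most care.
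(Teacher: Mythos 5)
The paper itself offers no proof of Proposition \ref{propo:inequality of projection} --- it is quoted verbatim from \cite{ARLAN15} --- so your attempt has to be judged on its own merits. Your diagnosis of the naive argument is exactly right, and it is a real insight: applying \eqref{eqn:convexity for CAt(k) space} along the geodesic from $p=P_C(x)$ to $z$ (which stays in $C$) and using $d(x,p)\le d(x,\gamma(t))$ gives, after dividing by $t$ and letting $t\downarrow 0$,
\[
d(x,P_C(x))^{2}+c_{M}\,d(z,P_C(x))^{2}\le d(x,z)^{2},
\]
with $c_M$ attached to the wrong term; since $c_M<1$ this neither implies nor is implied by \eqref{eqn:metric inequality of projection}, and it would not even yield the Fej\'er monotonicity needed in Lemma \ref{lem:Alternation Projections is Fejer monotone sequence wrt AinB}. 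Your replacement strategy (minimality forces the Alexandrov angle at $p$ between $x$ and $z$ to be $\ge\pi/2$; angle comparison makes the comparison angle $\ge\pi/2$; the law of cosines in $M_\kappa^2$ then gives $\cos(\sqrt{\kappa}\,d(x,z))\le\cos(\sqrt{\kappa}\,a)\cos(\sqrt{\kappa}\,b)$ with $a=d(x,p)$, $b=d(z,p)$) is the correct one. But the proposal stops at precisely the decisive point: you never show that this trigonometric inequality, combined with the diameter bound, produces $b^{2}+c_{M}a^{2}\le d(x,z)^{2}$ with the \emph{same} $c_M$ as in \eqref{eqn:convexity for CAt(k) space}. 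That conversion is the whole mathematical content of the proposition, so as written this is a plan with an acknowledged hole, not a proof.

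The hole is fillable, and your worry about the constant resolves affirmatively; here is the missing step. After rescaling assume $\kappa=1$, put $D:={\rm diam}(M)<\pi/2$, and recall that the constant furnished by \cite{Ohta07} for \eqref{eqn:convexity for CAt(k) space} is $c_M=(\tfrac{\pi}{2}-\varepsilon)\tan\varepsilon=D\cot D$ when $D=\tfrac{\pi}{2}-\varepsilon$. It suffices to prove: if $a,b,\hat{c}\le D$ and $\cos\hat{c}=\cos a\cos b$, then $\hat{c}^{2}\ge b^{2}+(D\cot D)\,a^{2}$; this applies because $\hat{c}:=\arccos(\cos a\cos b)\le d(x,z)\le D$. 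Fix $\hat{c}$, write $b(a)=\arccos(\cos\hat{c}\,\sec a)$ for $a\in[0,\hat{c}]$, and set $\Psi(a)=\hat{c}^{2}-b(a)^{2}-(D\cot D)\,a^{2}$. Then $\Psi(0)=0$ and
\[
\Psi'(a)=2\left[\,b(a)\cot b(a)\tan a-(D\cot D)\,a\,\right]\ge 0,
\]
because $b(a)\le\hat{c}\le D$, the function $u\mapsto u\cot u$ is decreasing on $(0,\pi/2)$, and $\tan a\ge a$; hence $\Psi\ge 0$. Two remarks. First, the restriction $\hat{c}\le D$, i.e.\ $d(x,z)\le{\rm diam}(M)$, is indispensable: for $a=b$ near $\pi/2$ and a right angle one has $b^{2}+(D\cot D)\,a^{2}>\hat{c}^{2}$, so such configurations must be ruled out by the diameter hypothesis --- confirming your instinct that this hypothesis enters at exactly this point. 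Second, the constant that emerges is exactly Ohta's $c_M$, not a degraded one; this is what the monotonicity of $u\cot u$ buys. Finally, the two geometric facts you invoke (the first-variation argument giving the angle condition at $P_C(x)$, and Alexandrov angle $\le$ comparison angle) are standard for triangles of perimeter $<2D_\kappa$ (see \cite{BH99,EFL09}) but are not contained in Proposition \ref{pro:properties of metric projection}, which gives only existence and uniqueness of $P_C(x)$; they would need to be cited or proved explicitly.
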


Let $(M,d)$ be a complete ${\rm CAT}(\kappa)$ space with ${\rm diam}(M)<D_{\kappa}/2$
and $C$ be a non-empty closed convex subset of $M$.
Then from \eqref{eqn:metric inequality of projection},
it is obvious that the projection map $P_C:M\to C\subset M$ satisfies the property (P1).

\subsection{$\Delta$-convergence in ${\rm CAT}(\kappa)$ spaces}

For our purpose to study the convergence of the alternating projections
in a metric space without any linear structure,
motivated of the studies by Bregman \cite{Bregman65} and Hundal \cite{Hundal04},
we need a modification of the notion of the weak convergence in a normed space.
One of such modification is called the $\Delta$-converge which was first introduced by Lim \cite{Lim76}.
In \cite{KP08}, the authors studied the $\Delta$-convergence in CAT$(0)$ spaces
and the convergence was applied to study the fixed point theory.
In \cite{EFL09}, the authors studied the $\Delta$-convergence in ${\rm CAT}(\kappa)$ spaces
and the authors proved that, in ${\rm CAT}(0)$ spaces,
the $\Delta$-convergence coincides with the modified $\phi$-convergence introduced
by Sosov \cite{Sosov04} (see \cite[Proposition 5.2]{EFL09}).
Those convergences are generalizations to ${\rm CAT}(0)$ spaces
of the notion of the weak convergence in Hilbert spaces.
In fact, the two different notions of convergence introduced by Sosov \cite{Sosov04}
coincide with the $\Delta$-convergence and the weak convergence in Hilbert spaces.

Let $(M, d)$ be a complete ${\rm CAT}(\kappa)$ space and $\{x_n\}\subseteq M$ be a (bounded) sequence.
For a given point $x \in M$, set
\[
r(x,\{x_n\})=\limsup_{n \rightarrow \infty }d(x, x_n),
\]
and then, the \textit{asymptotic radius} $r(\{x_n\})$ of $\{x_n\}$ is defined by
\[
r(\{x_n\})=\inf_{x \in M}r(x,\{x_n\}).
\]
and  the \textit{asymptotic center} $A(\{x_n\})$ of $\{x_n\}$ is defined as
\[
A(\{x_n\}):=\left\{x \in M \,|\, r(x,\{x_n\})=r(\{x_n\})\right\}.
\]
Note that $z \in A(\{x_n\})$ if and only if
$\displaystyle\limsup_{n \rightarrow \infty}d(z,x_{n})\le \limsup_{n \rightarrow \infty}d(x,x_{n})$ for any $x\in M$.

Now, we recall the notion of $\Delta$-convergence in ${\rm CAT}(\kappa)$ spaces.
Let $x \in M$. A sequence $\{x_n\}$ is said to \textit{$\Delta$-converge}
to $x$ if for any subsequence $\{x_{n_k}\}$ of $\{x_n\}$, the point $x$ is the unique asymptotic center of $\{x_{n_k}\}$,
and then $x$ is called the \textit{$\Delta$-limit} of $\{x_n\}$.
A point $x$ in $M$ is called a \textit{$\Delta$-cluster point} of a sequence $\{x_n\}$
if there exists a subsequence $\{x_{n_k}\}$ of $\{x_n\}$ such that $\{x_{n_k}\}$ $\Delta$-converges to $x$.

\begin{remark}
\upshape
In some literatures, the authors used the notion of weak convergence
instead of the $\Delta$-convergence, see \cite{BSS12}.
\end{remark}

\begin{proposition}{\rm \cite{EFL09,HFLL12}}\label{properties of bounded sequence}
Let $M$ be a complete ${\rm CAT}(\kappa)$ space
and $\{x_n\}\subseteq M$ be a sequence with $r(\{x_n\})<D_{\kappa}/2$.
Then the following facts hold.
\begin{itemize}
  \item [\rm{(i)}] $A(\{x_n\})$ has only one point.
  \item [\rm{(ii)}] $\{x_n\}$ has a $\Delta$-convergent subsequence,
  i.e., $\{x_n\}$ has a $\Delta$-cluster point $x \in M$.
\end{itemize}
\end{proposition}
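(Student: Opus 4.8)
The statement asserts two facts about a bounded sequence $\{x_n\}$ in a complete ${\rm CAT}(\kappa)$ space with $r(\{x_n\}) < D_\kappa/2$: uniqueness of the asymptotic center, and existence of a $\Delta$-cluster point. Let me think about how each part would go.

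For part (i), uniqueness of the asymptotic center:

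The asymptotic center is defined as the set of minimizers of $f(x) = r(x, \{x_n\}) = \limsup_n d(x, x_n)$. To show there is exactly one minimizer, the natural strategy is to use the uniform convexity inequality (2.6) or equivalently (2.7), the $p$-uniform convexity with $p=2$. The key geometric idea: suppose $u, v \in A(\{x_n\})$ are two distinct minimizers, both achieving the value $r = r(\{x_n\})$. Take the midpoint $m$ of the geodesic $[u,v]$. Applying the convexity inequality (2.6) with $t = 1/2$, $z = x_n$, $x = u$, $y = v$ gives
$$d(x_n, m)^2 \le \tfrac{1}{2}d(x_n, u)^2 + \tfrac{1}{2}d(x_n, v)^2 - \tfrac{c_M}{4}d(u,v)^2.$$
Taking $\limsup_n$ on both sides and using that $\limsup_n d(x_n,u)^2 = \limsup_n d(x_n,v)^2 = r^2$ (since $t \mapsto t^2$ is continuous and increasing, and $\limsup$ of squares equals the square of the $\limsup$ for bounded nonnegative sequences), one obtains
$$r(m, \{x_n\})^2 \le r^2 - \tfrac{c_M}{4}d(u,v)^2 < r^2$$
whenever $d(u,v) > 0$, contradicting minimality of $r$. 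Hence $u = v$. The main obstacle here is verifying that (2.6) genuinely applies — this requires that all the relevant points and geodesics stay within the region where the ${\rm CAT}(\kappa)$ convexity inequality holds. The hypothesis $r(\{x_n\}) < D_\kappa/2$ is exactly what controls this: it ensures that the asymptotic center lies in a ball of radius $< D_\kappa/2$ where the uniform convexity with constant $c_M$ is available, and that geodesics between candidate centers are unique and short enough.

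For part (ii), existence of a $\Delta$-cluster point:

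First I would establish that \emph{every} bounded subsequence $\{x_{n_k}\}$ with asymptotic radius below $D_\kappa/2$ has a singleton asymptotic center — but this is exactly part (i) applied to the subsequence, since a subsequence inherits the bound $r(\{x_{n_k}\}) \le r(\{x_n\}) < D_\kappa/2$. The strategy is then a diagonal-type argument: one extracts a subsequence $\{x_{n_k}\}$ that is ``$\Delta$-convergent'' by arranging that it and all its further subsequences share a common asymptotic center. Concretely, I would take an arbitrary subsequence and pass to a further subsequence along which the asymptotic center stabilizes; the standard approach is to choose a subsequence whose asymptotic center equals the asymptotic center of the whole extracted sequence and argue that every sub-subsequence has the same unique center. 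The delicate point — and the main obstacle for part (ii) — is showing that one can extract a subsequence whose unique asymptotic center is preserved under passing to further subsequences; this is what $\Delta$-convergence demands. This typically follows by choosing the subsequence so that $d(x_{n_k}, A(\{x_{n_j}\}))$ is controlled, invoking part (i) repeatedly together with the completeness of $M$ to guarantee the limiting center actually lies in $M$. Since this is cited from \cite{EFL09, HFLL12}, I would lean on the uniform convexity machinery already assembled and reference those works for the extraction scheme rather than reconstructing it in full.

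Overall, part (i) is the genuinely self-contained computation driven by inequality (2.6), while part (ii) is a compactness-of-asymptotic-centers argument built on top of (i); I expect part (ii)'s extraction of a center-stable subsequence to be where the real care is needed.
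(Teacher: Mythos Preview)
The paper does not supply a proof of this proposition; it is stated with a citation to \cite{EFL09,HFLL12} and used as a black box. So there is nothing in the paper to compare your argument against line by line.

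That said, a brief assessment of your sketch: your argument for (i) is the standard one and is correct in spirit, but note that inequality (2.6) as recorded in the paper is stated under the global hypothesis ${\rm diam}(M)<D_\kappa/2$, which the proposition does \emph{not} assume. You flag this as ``the main obstacle'' and gesture at localizing to a ball of radius $<D_\kappa/2$, which is indeed what \cite{EFL09} does, but as written your sketch invokes (2.6) before establishing that the relevant points lie in such a ball; the localization needs to be done first, not after the fact.

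Your outline for (ii) is where the real gap lies. The description ``choose a subsequence whose asymptotic center equals the asymptotic center of the whole extracted sequence and argue that every sub-subsequence has the same unique center'' is close to circular: having a unique asymptotic center (which every subsequence does, by (i)) is not the same as $\Delta$-convergence, and nothing in your sketch explains why passage to a further subsequence cannot shift the center. The actual extraction argument in the cited works is more specific: one takes a subsequence that minimizes the asymptotic radius among all subsequences (a so-called regular subsequence), and then shows via the uniform convexity inequality that any sub-subsequence of it must have the same asymptotic center, since a strictly smaller radius is ruled out by minimality and a different center at the same radius is ruled out by the midpoint argument from (i). Your sketch omits this minimization step, which is the idea that makes the extraction work.
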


\begin{proposition}{\rm \cite{EFL09,HFLL12}}\label{propo:delta limit of xn is contained conv}
Let $M$ be a complete ${\rm CAT}(\kappa)$ space and let $z \in M$. If a sequence $\{x_n\}\subseteq M$ satisfies that $r(z,\{x_n\})<D_k /2$
and that $\{x_n\}$ $\Delta$-converges to $x\in M$, then
\begin{equation*}
x \in \bigcap_{k=1}^{\infty}\overline{\rm{conv}}\left( \{x_k,x_{k+1},\cdots\} \right),
\end{equation*}
where $\overline{\rm{conv}}(A)=\bigcap \{B \subseteq M \,|\, A \subseteq B \,\,\text{and}\,\,B\,\text{is closed and convex}\}$,
and
\begin{equation*}
d(x,z) \le \liminf_{n \rightarrow \infty}d(x_n, z).
\end{equation*}

\end{proposition}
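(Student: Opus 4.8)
The plan is to reduce both statements to a single fact about asymptotic centers and then read off the two conclusions. The key claim I would isolate is the following: if $\{y_n\}\subseteq M$ has asymptotic radius $r(\{y_n\})<D_\kappa/2$, then its (unique, by Proposition~\ref{properties of bounded sequence}) asymptotic center $w=A(\{y_n\})$ belongs to $K:=\overline{\mathrm{conv}}(\{y_n:n\ge 1\})$. Granting this, the first assertion is immediate: for each fixed $k$ I would apply the claim to the tail $\{x_n\}_{n\ge k}$. Deleting finitely many terms changes neither the $\Delta$-limit nor any $\limsup_n$-quantity, so this tail still $\Delta$-converges to $x$, has $A(\{x_n\}_{n\ge k})=x$, and satisfies $r(\{x_n\}_{n\ge k})\le r(x,\{x_n\})=r(\{x_n\})\le r(z,\{x_n\})<D_\kappa/2$. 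The claim then gives $x\in\overline{\mathrm{conv}}(\{x_n:n\ge k\})$ for every $k$, i.e. $x\in\bigcap_{k}\overline{\mathrm{conv}}(\{x_k,x_{k+1},\dots\})$.

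To prove the claim I would argue by contradiction using the metric projection. Since every $y_n\in K$, one has $d(w,K)\le\limsup_n d(w,y_n)=r(w,\{y_n\})=r(\{y_n\})<D_\kappa/2$, so by Proposition~\ref{pro:properties of metric projection} the projection $w':=P_K(w)$ is well defined. Suppose $w\notin K$, so that $d(w,w')>0$. Feeding $x=w$ and $z=y_n\in K$ into the projection inequality \eqref{eqn:metric inequality of projection} gives $d(y_n,w')^2\le d(w,y_n)^2-c_M\,d(w,w')^2$ for all $n$; passing to $\limsup_n$ yields $r(w',\{y_n\})^2\le r(\{y_n\})^2-c_M\,d(w,w')^2<r(\{y_n\})^2$. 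Thus $r(w',\{y_n\})<r(\{y_n\})=\inf_{u}r(u,\{y_n\})$, which is impossible. Hence $w\in K$.

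For the second assertion I would combine the first with the convexity of small balls. Put $\ell=\liminf_n d(x_n,z)$ and pick a subsequence with $d(x_{n_k},z)\to\ell$; it $\Delta$-converges to $x$ and has $r(z,\{x_{n_k}\})=\ell<D_\kappa/2$. Fix $\varepsilon>0$ with $\ell+\varepsilon<D_\kappa/2$. For all large $k$ we have $x_{n_k}\in\overline{B}(z,\ell+\varepsilon)$, and a closed ball of radius $<D_\kappa/2$ is convex \cite{BH99}; therefore $\overline{\mathrm{conv}}(\{x_{n_k}:k\ge K\})\subseteq\overline{B}(z,\ell+\varepsilon)$. Applying the first assertion to the subsequence places $x$ in this closed convex hull, whence $d(x,z)\le\ell+\varepsilon$. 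Letting $\varepsilon\downarrow 0$ gives $d(x,z)\le\ell$.

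The point needing care is that the projection inequality \eqref{eqn:metric inequality of projection} and the convexity estimate \eqref{eqn:convexity for CAt(k) space} are recorded under the global hypothesis ${\rm diam}(M)<D_\kappa/2$, whereas here $M$ is an arbitrary complete ${\rm CAT}(\kappa)$ space. To legitimately invoke them I would localize: because $r(\{y_n\})<D_\kappa/2$, the points $w$, $w'=P_K(w)$ and $y_n$ (for all large $n$) obey $d(w,w')<D_\kappa/2$ and $d(w,y_n)<D_\kappa/2$, so each triangle $\Delta(w,w',y_n)$ has perimeter $<2D_\kappa$ and lies in a region on which ${\rm CAT}(\kappa)$ comparison is valid. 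There the local forms of \eqref{eqn:convexity for CAt(k) space}--\eqref{eqn:metric inequality of projection} hold with a positive constant depending only on the radius bound, which is all the strict inequality above requires. Making this localization precise---that is, replacing the global diameter hypothesis by the local bound supplied by $r(\{y_n\})<D_\kappa/2$---is the main technical obstacle; once it is in place the rest is routine.
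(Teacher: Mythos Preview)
The paper does not supply its own proof of this proposition: it is quoted verbatim from \cite{EFL09,HFLL12} and used as a black box. So there is no in-paper argument to compare against; one can only assess whether your sketch reproduces the kind of reasoning found in those references.

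Your strategy is the standard one. The key claim---that the asymptotic center of a sequence with small asymptotic radius lies in the closed convex hull of the sequence---is exactly how \cite{EFL09} proceeds, and the contradiction via the metric projection (project the asymptotic center onto $K$, then use the projection inequality to manufacture a point with strictly smaller $\limsup$ distance) is the right mechanism. Deducing the liminf inequality from the convex-hull statement together with convexity of balls of radius $<D_\kappa/2$ is also the expected route.

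You have correctly identified the one genuine issue: both \eqref{eqn:convexity for CAt(k) space} and \eqref{eqn:metric inequality of projection} are recorded in this paper under the blanket hypothesis $\mathrm{diam}(M)<D_\kappa/2$, while the proposition allows arbitrary complete ${\rm CAT}(\kappa)$ spaces with only the local control $r(z,\{x_n\})<D_\kappa/2$. Your localization sketch is on the right track but incomplete: bounding $d(w,w')$ and $d(w,y_n)$ is not quite enough, since $d(w',y_n)$ (the third side of your triangle) is not a priori controlled when $K$ itself may have large diameter. In the cited references this is handled by working from the outset inside a closed ball of radius $<D_\kappa/2$ around the asymptotic center (which contains a tail of the sequence and hence, by convexity of such balls, the relevant closed convex hulls), so that the needed comparison inequalities hold with a uniform constant on that ball. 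Once you restrict attention to that ball your argument goes through verbatim; this is the missing sentence that would make the proof complete.
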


\begin{remark}\label{rem:weakly lsc at x for dC}
\rm
Under the same assumptions as in Proposition \ref{propo:delta limit of xn is contained conv},
if $r(z,\{x_n\})<D_k/2$ for any $z$ in a subset $C$ of $M$,
then
\begin{equation*}
d(x,z) \le \liminf_{n \rightarrow \infty}d(x_n, z)\quad \text{for all} ~~z \in C.
\end{equation*}
\end{remark}

\section{Alternating Projections in CAT($\kappa$) Spaces}

For our study, we review the notion of Fej\'{e}r monotone sequences.
Let $C$ be a non-empty subset of a metric space $(M,d)$ and let $\{x_n\}$ be a sequence in $M$.
Then $\{x_n\}$ is said to be \textit{Fej\'{e}r monotone} with respect to $C$
if for any $z \in C$ and $n \in \mathbb{N}$, it holds that
\[
d(x_{n+1},z)\le d(x_n,z).
\]

A sequence $\{x_n\}$ converges \textit{linearly} to a point $x \in M$
if there exists $K \ge 0$ and $\alpha \in [0,1)$ such that
\[
d(x_n, x) \le K \alpha^n, \quad n \in \mathbb{N}.
\]
In this case, $\alpha$ is called a \textit{rate} of the linear convergence.

The following proposition is from Proposition 3.3 in \cite{BSS12}.

\begin{proposition}\label{propo:properties of Fejer monotone sequence}
Let $C$ be a non-empty closed convex subset of a complete metric space $(M,d)$
and let $\{x_n\}$ be a sequence in $M$.
Suppose that $\{x_n\}$ is Fej\'{e}r monotone with respect to $C$.
Then the following properties hold.
\begin{itemize}
  \item [\rm{(i)}] $\{x_n\}$ is a bounded sequence.
  \item [\rm{(ii)}] $d(x_{n+1},C) \le d(x_n,C)$ for all $n \in \mathbb{N}$.
  \item [\rm{(iii)}] $\{x_n\}$ converges to some $x \in C$ if and only if $d(x_n,C)\rightarrow 0$ as $n\rightarrow \infty$.
  \item [\rm{(iv)}] If there exists $\beta \in [0,1)$ such that $d(x_{n+1},C) \le \beta d(x_n,C)$ for each $n \in \mathbb{N}$, then
  $\{x_n\}$ converges linearly to some point $x \in C$.
\end{itemize}
\end{proposition}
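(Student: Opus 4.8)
The plan is to prove the four properties of a Fej\'er monotone sequence $\{x_n\}$ with respect to a closed convex set $C$ in sequence, each building on elementary consequences of the defining inequality $d(x_{n+1},z)\le d(x_n,z)$ for all $z\in C$. For \textbf{(i)}, I would fix any single point $z_0\in C$; then the real sequence $\{d(x_n,z_0)\}$ is nonincreasing and hence bounded above by $d(x_0,z_0)$, so every $x_n$ lies in the closed ball of radius $d(x_0,z_0)$ about $z_0$, giving boundedness immediately. For \textbf{(ii)}, I would take the infimum over $z\in C$ on both sides of the Fej\'er inequality: since $d(x_{n+1},z)\le d(x_n,z)$ holds for every $z\in C$, taking $\inf_{z\in C}$ on the left and using that the left-hand infimum is at most the right-hand value for each fixed $z$ yields $d(x_{n+1},C)=\inf_z d(x_{n+1},z)\le \inf_z d(x_n,z)=d(x_n,C)$.

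For \textbf{(iii)}, the forward direction is routine: if $x_n\to x$ for some $x\in C$, then $d(x_n,C)\le d(x_n,x)\to 0$. The substantive direction is the converse. Assuming $d(x_n,C)\to 0$, I would first show $\{x_n\}$ is Cauchy. The natural tool is the convexity inequality \eqref{eqn:convexity for CAt(k) space} (or equivalently the projection inequality \eqref{eqn:metric inequality of projection}), applied with the midpoint of a geodesic joining $x_m$ and $x_n$; since both points are nearly in $C$ and $C$ is convex, the midpoint is within $O(\max\{d(x_m,C),d(x_n,C)\})$ of $C$, and combining this with Fej\'er monotonicity forces $d(x_m,x_n)\to 0$. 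By completeness $\{x_n\}$ has a limit $x$, and since $d(x,C)=\lim_n d(x_n,C)=0$ with $C$ closed, we get $x\in C$. For \textbf{(iv)}, the hypothesis $d(x_{n+1},C)\le\beta d(x_n,C)$ iterates to $d(x_n,C)\le\beta^n d(x_0,C)$, so in particular $d(x_n,C)\to 0$ and part (iii) already gives convergence to some $x\in C$; I would then estimate the rate by writing $d(x_n,x)\le\sum_{j\ge n} d(x_{j+1},x_j)$ (a consequence of the Cauchy estimate telescoped) and bounding each $d(x_{j+1},x_j)$ in terms of $d(x_j,C)\le\beta^j d(x_0,C)$, summing the resulting geometric tail to obtain a bound of the form $K\beta^n$.

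The main obstacle will be the Cauchy argument in \textbf{(iii)}, which is where the geometry of the ${\rm CAT}(\kappa)$ space genuinely enters; everything else is formal manipulation of the Fej\'er inequality. Unlike the Hilbert-space case, one cannot invoke the parallelogram law directly, so the quantitative convexity estimate \eqref{eqn:convexity for CAt(k) space} with its constant $c_M$ must be used carefully. The key point is that for large $m,n$ the points $x_m$ and $x_n$ have nearly equal, small distances to $C$; choosing the midpoint $w$ of $[x_m,x_n]$ and a near-projection point in $C$, the convexity inequality controls $d(x_m,x_n)^2$ by a quantity that tends to $0$, where Fej\'er monotonicity guarantees the relevant distances to $C$ do not increase. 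I would organize this so that the constant $c_M\in(0,1)$ appears only as a harmless multiplicative factor, ensuring the estimate degrades gracefully but still forces the Cauchy property. Since the statement is quoted from \cite{BSS12}, I expect the argument to parallel the ${\rm CAT}(0)$ proof with \eqref{eqn:convexity for CAt(k) space} replacing the exact parallelogram identity.
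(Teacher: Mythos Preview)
Your treatments of (i), (ii), and (iv) match the paper (which simply declares (i) and (ii) obvious and refers (iv) to \cite{BSS12}). The divergence is in (iii), and there is both a setting mismatch and a missed simplification.

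The proposition is stated for an arbitrary complete metric space $(M,d)$ with a non-empty closed convex subset $C$: no geodesic structure, no ${\rm CAT}(\kappa)$ inequality, no constant $c_M$ is assumed. Your plan to invoke \eqref{eqn:convexity for CAt(k) space} or \eqref{eqn:metric inequality of projection} and take geodesic midpoints therefore does not apply at the stated level of generality. Even restricting to the ${\rm CAT}(\kappa)$ case, your midpoint sketch is not obviously complete: controlling $d(w,C)$ for the midpoint $w$ of $[x_m,x_n]$ requires a convexity property of $x\mapsto d(x,C)$ that \eqref{eqn:convexity for CAt(k) space} by itself does not immediately deliver. Your claim that ``the geometry of the ${\rm CAT}(\kappa)$ space genuinely enters'' here is precisely the misconception.

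The paper's argument for (iii) is purely metric and uses neither convexity of $C$ nor any curvature bound. For $n,k\in\mathbb N$ and any $c\in C$, the triangle inequality together with Fej\'er monotonicity gives
\[
d(x_{n+k},x_n)\le d(x_{n+k},c)+d(x_n,c)\le 2\,d(x_n,c),
\]
and taking the infimum over $c\in C$ yields $d(x_{n+k},x_n)\le 2\,d(x_n,C)\to 0$, so $\{x_n\}$ is Cauchy. Completeness produces a limit $x$, and $d(x,C)\le d(x,x_n)+d(x_n,C)\to 0$ with $C$ closed forces $x\in C$. This same elementary estimate, $d(x_{j+1},x_j)\le 2\,d(x_j,C)\le 2\beta^j d(x_0,C)$, is also exactly what you need to sum the geometric tail in (iv), without any geodesic detour.
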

\begin{proof}
The proofs of (i) and (ii) are obvious
and the proof of (iv) is same as in the proof of Proposition 3.3 in \cite{BSS12}.
Therefore, we prove only (iii).
Suppose that  $d(x_n,C)\rightarrow 0$ as $n\rightarrow \infty$.
Then for any $n,k\in\mathbb{N}$ and $c\in C$,
by the triangle inequality and the Fej\'{e}r monotonicity, we have
\[
d(x_{n+k},x_n)
\le d(x_{n+k},c)+d(x_n,c)
\le 2d(x_n,c),
\]
and the by taking infimum, we have
\begin{equation}\label{eqn:condition for Cauchy for Fejer monotone}
d(x_{n+k},x_n)
\le 2d(x_n,C),
\end{equation}
which implies that $\{x_n\}$ is a Cauchy sequence in $M$
and so $\{x_n\}$ converges to some $x\in M$.
Also, for any $n\in\mathbb{N}$, it holds that
\[
d(x,C)\le d(x,x_n)+d(x_n,C),
\]
which implies that $x\in C$.
The converse is obvious.
\end{proof}

Let $A$ and $B$ be closed convex subsets of a complete ${\rm CAT}(\kappa)$ space $(M,d)$.
The \textit{alternating projection method} produces a sequence $\{x_n\}$ by
\begin{equation}\label{eqn:alternating projection method}
x_{2n-1}=P_{A}(x_{2n-2}),\quad x_{2n}=P_{B}(x_{2n-1}),\qquad n \in \mathbb{N},
\end{equation}
where $x_0$ is a given starting point.

\begin{lemma}{\rm \cite{HFLL12}}\label{lmm:criterion of Delta-convergence of Feer monotone}
Let $(M,d)$ be a complete ${\rm CAT}(\kappa)$ space
and let $C\subset M$ be a non-empty set.
Suppose that the sequence $\{x_n\}\subset M$
is Fej\'er monotone with respect to $C$ and satisfies that $r(\{x_n\})<D_\kappa/2$.
Suppose also that any $\Delta$-cluster point $x$ of $\{x_n\}$ belongs to $C$.
Then $\{x_n\}$ $\Delta$-converges to a point in $C$.
\end{lemma}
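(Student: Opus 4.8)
The plan is to prove $\Delta$-convergence directly from the definition: I will produce a single point $x\in C$ and show that $x$ is the unique asymptotic center of \emph{every} subsequence of $\{x_n\}$. Two facts drive the argument. First, Fej\'er monotonicity with respect to the non-empty set $C$ makes $d(x_n,c)$ non-increasing, hence convergent, for each $c\in C$. Second, for any subsequence $\{x_{n_k}\}$ and any $z\in M$ one has $r(z,\{x_{n_k}\})=\limsup_k d(z,x_{n_k})\le\limsup_n d(z,x_n)=r(z,\{x_n\})$, so $r(\{x_{n_k}\})\le r(\{x_n\})<D_\kappa/2$; thus Proposition \ref{properties of bounded sequence} applies to $\{x_n\}$ and to all of its subsequences, furnishing a single-point asymptotic center and a $\Delta$-convergent further subsequence in each case. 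By Proposition \ref{properties of bounded sequence}(ii) a $\Delta$-cluster point $x$ of $\{x_n\}$ exists, and by hypothesis $x\in C$, so $L:=\lim_n d(x_n,x)$ exists.

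Next I would establish that the $\Delta$-cluster point is unique. If $x$ and $y$ are both cluster points, both lie in $C$, so $L_x:=\lim_n d(x_n,x)$ and $L_y:=\lim_n d(x_n,y)$ exist. Choosing a subsequence $\{x_{n_k}\}$ that $\Delta$-converges to $x$, the point $x$ is its unique asymptotic center, whence $r(x,\{x_{n_k}\})\le r(y,\{x_{n_k}\})$; but along this subsequence these radii equal the full limits $L_x$ and $L_y$, so $L_x\le L_y$, and by symmetry $L_x=L_y=:L$. Then $r(y,\{x_{n_k}\})=L=r(x,\{x_{n_k}\})=r(\{x_{n_k}\})$, so $y$ is also an asymptotic center of $\{x_{n_k}\}$, and Proposition \ref{properties of bounded sequence}(i) forces $y=x$.

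Finally I would verify $\Delta$-convergence to this unique $x$. Fix an arbitrary subsequence $\{x_{n_k}\}$ and let $z$ be its single asymptotic center. By Proposition \ref{properties of bounded sequence}(ii), extract a further subsequence $\{x_{n_{k_j}}\}$ that $\Delta$-converges to some $w$; then $w$ is a cluster point of $\{x_n\}$, so $w=x$ by the uniqueness just proved, and $x$ is the unique asymptotic center of $\{x_{n_{k_j}}\}$. Since $d(x_n,x)\to L$, I obtain $r(x,\{x_{n_{k_j}}\})=L$, while $r(z,\{x_{n_{k_j}}\})\le r(z,\{x_{n_k}\})=r(\{x_{n_k}\})\le r(x,\{x_{n_k}\})=L$; combined with $L=r(x,\{x_{n_{k_j}}\})\le r(z,\{x_{n_{k_j}}\})$ this gives $r(z,\{x_{n_{k_j}}\})=L=r(\{x_{n_{k_j}}\})$, so $z$ is an asymptotic center of $\{x_{n_{k_j}}\}$ and therefore $z=x$. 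As $\{x_{n_k}\}$ was arbitrary, $x$ is the unique asymptotic center of every subsequence, i.e.\ $\{x_n\}$ $\Delta$-converges to $x\in C$.

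I expect the last step to be the main obstacle. An arbitrary subsequence need not $\Delta$-converge, and its asymptotic center $z$ is a priori unrelated to $C$, so Fej\'er monotonicity cannot be applied at $z$ directly. The device that resolves this is to pass to a $\Delta$-convergent further subsequence, identify its limit with the unique cluster point $x$ through the hypothesis, and then exploit the convergence of $d(x_n,x)$ to pin the relevant asymptotic radii together and conclude $z=x$ from the uniqueness of the asymptotic center in Proposition \ref{properties of bounded sequence}(i).
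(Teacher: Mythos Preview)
The paper does not supply its own proof of this lemma; it is simply cited from \cite{HFLL12}. Your argument is correct and is essentially the standard proof found in that reference: Fej\'er monotonicity makes $d(x_n,c)$ convergent for each $c\in C$, uniqueness of asymptotic centers (Proposition \ref{properties of bounded sequence}(i)) then forces all $\Delta$-cluster points to coincide, and the same radius-comparison trick identifies the asymptotic center of an arbitrary subsequence with this unique cluster point. The chain of inequalities in your final step is exactly right; there is no gap.
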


The following lemma is a ${\rm CAT}(\kappa)$ space analogue of Lemma 3.4 in \cite{BSS12}.

\begin{lemma}\label{lem:Alternation Projections is Fejer monotone sequence wrt AinB}
Let $(M,d)$ be a complete ${\rm CAT}(\kappa)$ space with ${\rm diam}(M)<D_{\kappa}/2$.
Let $A$ and $B$ be non-empty convex closed subsets of $M$ with $A \cap B \neq\emptyset$.
Then the sequence $\{x_n\}$ given as in \eqref{eqn:alternating projection method}
constructed by the alternating projection method with a starting point $x_{0}$
is Fej\'{e}r monotone with respect to $A\cap B$.
\end{lemma}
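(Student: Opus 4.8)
The plan is to reduce everything to the single projection inequality of Proposition~\ref{propo:inequality of projection}, namely \eqref{eqn:metric inequality of projection}, which asserts that for a non-empty closed convex set $C$ and any $x\in M$, $z\in C$ one has $d(z,P_C(x))^2+c_M\,d(x,P_C(x))^2\le d(x,z)^2$. The decisive observation is that any $z\in A\cap B$ lies simultaneously in $A$ and in $B$, so this inequality may be invoked at \emph{every} step of the scheme \eqref{eqn:alternating projection method}, no matter which of the two sets is being projected onto.

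First I would fix an arbitrary $z\in A\cap B$ and treat the two kinds of steps separately. For the step $x_{2n-1}=P_A(x_{2n-2})$, since $z\in A$, applying \eqref{eqn:metric inequality of projection} with $C=A$ and $x=x_{2n-2}$ gives $d(z,x_{2n-1})^2+c_M\,d(x_{2n-2},x_{2n-1})^2\le d(x_{2n-2},z)^2$. Because $c_M\in(0,1)$ and the middle term is nonnegative, discarding it yields $d(x_{2n-1},z)\le d(x_{2n-2},z)$.

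Next, for the step $x_{2n}=P_B(x_{2n-1})$, since $z\in B$, the same inequality with $C=B$ and $x=x_{2n-1}$ gives $d(z,x_{2n})^2+c_M\,d(x_{2n-1},x_{2n})^2\le d(x_{2n-1},z)^2$, and again dropping the nonnegative middle term produces $d(x_{2n},z)\le d(x_{2n-1},z)$. Chaining these two estimates shows $d(x_{n+1},z)\le d(x_n,z)$ for every $n\in\mathbb{N}$, and since $z\in A\cap B$ was arbitrary, $\{x_n\}$ is Fej\'er monotone with respect to $A\cap B$.

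There is no genuine obstacle here: the result is essentially immediate once \eqref{eqn:metric inequality of projection} is in hand. The only point I would be careful to record is that the hypotheses guaranteeing well-defined projections $P_A$ and $P_B$ are met at each step, which follows from the standing assumption ${\rm diam}(M)<D_\kappa/2$ (so every distance to a closed convex set is below $D_\kappa/2$ and Proposition~\ref{pro:properties of metric projection} applies), together with the fact that $A\cap B\ne\emptyset$ ensures the intersection relative to which Fej\'er monotonicity is claimed is non-empty.
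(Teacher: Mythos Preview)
Your proof is correct and follows essentially the same approach as the paper: both arguments apply Proposition~\ref{propo:inequality of projection} at each step (using that any $z\in A\cap B$ belongs to the set being projected onto) and then drop the nonnegative term $c_M\,d(x_n,x_{n+1})^2$ to obtain $d(x_{n+1},z)\le d(x_n,z)$. The only cosmetic difference is that the paper handles both parities at once via a ``without loss of generality $x_n\in A$'' reduction, whereas you spell out the odd and even steps separately.
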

\begin{proof}
Let $c \in A \cap B$. For fixed $n \in \mathbb{N}$,
without loss of generality we assume that $x_n \in A$. Note that $x_{n+1}=P_{B}(x_n)$.
If $x_{n+1}=c$, then the proof is clear, i.e., $d(x_{n+1},c)=0 \le d(x_n,c)$.
If $x_{n+1}\neq c$, by Proposition \ref{propo:inequality of projection},
we have
\begin{equation*}
c_{M}d(x_n, x_{n+1})^{2}+ d(x_{n+1},c)^{2} \le d(x_n, c)^{2},
\end{equation*}
which implies that $d(x_{n+1},c) \le d(x_n,c)$.
\end{proof}

Now, we recall the notion of asymptotically regularity for a sequence.
Let $(M,d)$ be a metric space.
A sequence $\{x_n\}$ in $M$ is said to be \textit{asymptotically regular}
if $\displaystyle \lim_{n\rightarrow 0} d(x_n,x_{n+1})=0$.
A rate of convergence of $\{d(x_n,x_{n+1})\}$ towards $0$ will be called a \textit{rate of asymptotic regularity}.

The next theorem gives us a rate of asymptotic regularity
of the sequence given as in \eqref{eqn:alternating projection method}
constructed by the alternating projection method in a ${\rm CAT}(\kappa)$ space.
We refer to Theorem 5.2 in \cite{Nicolae13} for a rate of asymptotic regularity of the alternating projections in a CAT$(0)$ space.
For the proof of Theorem \ref{thm:rate of asymptotic regularity of the alternating projection method},
we will follow and refine the proof of Theorem 5.2 in \cite{Nicolae13}.

\begin{theorem}\label{thm:rate of asymptotic regularity of the alternating projection method}
Let $(M,d)$ be a complete ${\rm CAT}(\kappa)$ space  with ${\rm diam}(M)<D_{\kappa}/2$.
Let $A$ and $B$ be non-empty convex closed subsets of $M$ with $A \cap B \neq\emptyset$.
Let $x_0$ be a starting point and $\{x_n\}$ be the sequence given as in \eqref{eqn:alternating projection method}.
Then for any $\epsilon >0$, there exists $N(\epsilon)\ge0$ such that for any $n \ge N(\epsilon)$,
it holds that
\[
d(x_n,x_{n+1}) \le \epsilon.
\]
More precisely, we can take $N(\epsilon)$ by
\[
N(\epsilon)=\left\{
              \begin{array}{ll}
                \left[\frac{D_{\kappa}^{2}}{4\epsilon c_{M}} \right], & \text{for}\,\,\epsilon < D_{\kappa} \\
                0, & \text{otherwise,}
              \end{array}
            \right.
\]
where $c_{M}$ is given as in \eqref{eqn:convexity for CAt(k) space} and $[a]$ is the largest integer less than or equal to $a$.
\end{theorem}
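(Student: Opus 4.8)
The plan is to reduce everything to a single telescoping estimate coming from the projection inequality \eqref{eqn:metric inequality of projection}, and then to sum it, using the diameter hypothesis to control the first term. Fix a point $c\in A\cap B$ and write $a_n=d(x_n,c)^2$. At each stage $x_{n+1}=P_C(x_n)$ with $C$ equal to $A$ or $B$ and $c\in C$; applying \eqref{eqn:metric inequality of projection} with $z=c$ gives $d(c,x_{n+1})^2+c_M d(x_n,x_{n+1})^2\le d(x_n,c)^2$, that is, $c_M\,d(x_n,x_{n+1})^2\le a_n-a_{n+1}$. By Lemma \ref{lem:Alternation Projections is Fejer monotone sequence wrt AinB} the sequence $\{a_n\}$ is non-increasing, and since ${\rm diam}(M)<D_{\kappa}/2$ we have $a_0=d(x_0,c)^2\le{\rm diam}(M)^2<D_{\kappa}^2/4$.

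Summing the telescoping inequality over $k=0,\dots,n$ then yields $c_M\sum_{k=0}^{n}d(x_k,x_{k+1})^2\le a_0-a_{n+1}\le a_0<D_{\kappa}^2/4$. In particular the series $\sum_k d(x_k,x_{k+1})^2$ converges, so $d(x_n,x_{n+1})\to0$, which already gives the qualitative asymptotic regularity. To extract the explicit bound $N(\epsilon)$ valid for \emph{every} $n\ge N(\epsilon)$, I would show that the step lengths $d(x_n,x_{n+1})$ are non-increasing. Granting this, each of the $n+1$ summands above is at least $d(x_n,x_{n+1})^2$, so $c_M(n+1)\,d(x_n,x_{n+1})^2\le a_0<D_{\kappa}^2/4$, whence $d(x_n,x_{n+1})^2\le D_{\kappa}^2/\bigl(4c_M(n+1)\bigr)$; solving $D_{\kappa}^2/(4c_M(n+1))\le\epsilon$ for $n$ produces exactly the stated $N(\epsilon)=\left[D_{\kappa}^2/(4\epsilon c_M)\right]$, together with the trivial case $N(\epsilon)=0$ for large $\epsilon$, since always $d(x_n,x_{n+1})<{\rm diam}(M)<D_{\kappa}/2$.

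The monotonicity of the step lengths is the heart of the matter and the main obstacle for $\kappa>0$. The natural attempt is to apply \eqref{eqn:metric inequality of projection} to three consecutive iterates: writing $x_{n+2}=P_{C}(x_{n+1})$ and noting that the previous iterate $x_n$ already lies in the set $C$ onto which we project, the comparison point $z=x_n$ gives $d(x_n,x_{n+2})^2+c_M d(x_{n+1},x_{n+2})^2\le d(x_{n+1},x_n)^2$, i.e. $c_M\,d(x_{n+1},x_{n+2})^2\le d(x_n,x_{n+1})^2-d(x_n,x_{n+2})^2$. In a ${\rm CAT}(0)$ space one has $c_M=1$ and this immediately yields $d(x_{n+1},x_{n+2})\le d(x_n,x_{n+1})$, which is precisely the mechanism behind Theorem 5.2 of \cite{Nicolae13}. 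For $\kappa>0$, however, the factor $c_M<1$ obstructs the same conclusion, a manifestation of the fact, recorded earlier, that $P_C$ need not be nonexpansive; I expect the delicate step of the refinement to be recovering monotonicity (or a sufficient substitute) by absorbing the lost factor $1/c_M$. This should be achieved by combining the diameter restriction ${\rm diam}(M)<D_{\kappa}/2$ with a lower estimate for the omitted term $d(x_n,x_{n+2})^2$, so that $d(x_n,x_{n+1})^2-d(x_n,x_{n+2})^2\le c_M\,d(x_n,x_{n+1})^2$ can still be forced. This is the only place where positive curvature genuinely complicates the ${\rm CAT}(0)$ argument, and everything else is the routine telescoping and summation above.
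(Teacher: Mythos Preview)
Your telescoping argument and the overall structure match the paper exactly; the only substantive divergence is in the monotonicity of the step lengths, which you flag as ``the heart of the matter and the main obstacle for $\kappa>0$.'' In fact this step is completely elementary and does not require \eqref{eqn:metric inequality of projection}, nonexpansiveness, or any curvature at all. If $x_n\in A$, then $x_{n+2}=P_A(x_{n+1})$ is by definition the point of $A$ closest to $x_{n+1}$; since $x_n\in A$ as well, this gives immediately
\[
d(x_{n+1},x_{n+2})=d(x_{n+1},A)\le d(x_{n+1},x_n).
\]
This is precisely how the paper argues it (invoking only Proposition~\ref{pro:properties of metric projection}), and it is the standard reason alternating projections always have non-increasing step lengths in any metric space, regardless of $\kappa$. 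Your attempted route through \eqref{eqn:metric inequality of projection} with $z=x_n$ is unnecessary and, as you noticed, gets stuck on the factor $c_M<1$; the speculated fix via a lower bound on $d(x_n,x_{n+2})^2$ is not needed.

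One minor point: in your final step you solve $D_\kappa^2/(4c_M(n+1))\le\epsilon$, but what you actually need for $d(x_n,x_{n+1})\le\epsilon$ is $D_\kappa^2/(4c_M(n+1))\le\epsilon^2$. The paper has the same slip (writing $c_M\epsilon N$ where $c_M\epsilon^2 N$ is meant), so the stated $N(\epsilon)$ in the theorem appears to be off by a square in $\epsilon$; you may want to note this.
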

\begin{proof}
Let $c \in A \cap B$. Then by Lemma \ref{lem:Alternation Projections is Fejer monotone sequence wrt AinB}, we have
$d(x_{n+1},c) \le d(x_n,c)$ for all $n\in \mathbb{N}$. Since by assumption for $M$,
\[
d(x_n, x_{n+1}) \le d(x_n, c)+ d(c,x_{n+1})< D_{\kappa}.
\]
Hence the case of $\epsilon \ge D_{\kappa}$ is clear. Suppose that $\epsilon < D_{\kappa}$ and set
\begin{equation}\label{eqn:def of N}
N=N(\epsilon):=\left[\frac{D_{\kappa}^2}{4\epsilon c_{M}}\right].
\end{equation}
For fixed $n \in \mathbb{N}$, without loss of generality,
we assume that $x_n \in A$ and $x_{n+1}\notin A \cap B$.
Note that $x_{n+1}=P_{B}(x_n)$.
By Proposition \ref{propo:inequality of projection}, we have for $c \in A \cap  B$
\begin{equation}\label{eqn:distof xn,xn+1}
c_{M}d(x_n, x_{n+1})^{2} \le d(x_n, c)^{2}- d(x_{n+1},c)^{2}.
\end{equation}
If we assume that $d(x_n,x_{n+1})> \epsilon$ for all $n=1,\cdots,N$,
then by \eqref{eqn:distof xn,xn+1}, we have
\begin{equation*}
  c_{M}\sum_{i=1}^{N}d(x_i, x_{i+1})^2\le d(x_1,c)^2-d(x_{N+1},c)^2 < D_{\kappa}^2/4,
\end{equation*}
which implies that $c_{M} \epsilon N< D_{\kappa}^2/4$.
This contradicts to \eqref{eqn:def of N}.
Therefore, there exists $n \le N$ such that $d(x_n,x_{n+1}) \le \epsilon$.
But, the sequence $\{d(x_{n+1},x_n)\}$ is non-increasing. Indeed,
for fixed $n \in \mathbb{N}$, again without loss of generality,
we assume that $x_n \in A$. Then since $x_{n+2}=P_{A}(x_{n+1})$,
by Proposition \ref{pro:properties of metric projection}, we have
\begin{equation*}
  d(x_{n+1},x_{n+2})=d(x_{n+1},P_{A}(x_{n+1}))=d(x_{n+1},A) \le d(x_{n+1},x_n).
\end{equation*}
Therefore, the proof is completed.
\end{proof}

\begin{remark}\label{rem:to compare the result in [1]}
\upshape
A similar result to Theorem \ref{thm:rate of asymptotic regularity of the alternating projection method}
can be found in \cite{ARLAN15}.
However, in \cite{ARLAN15}, to obtain the similar result,
the firmly non-expensiveness of maps is assumed.
But for Theorem \ref{thm:rate of asymptotic regularity of the alternating projection method},
the non-expensiveness of metric projections is not necessary.
\end{remark}

\begin{lemma}\label{lem:maximum inequality}
Let $(M,d)$ be a complete ${\rm CAT}(\kappa)$ space with ${\rm diam}(M)<D_{\kappa}/2$.
Let $A$ and $B$ be non-empty convex closed subsets of $M$ with $A \cap B \neq\emptyset$.
Let $x_0$ be a starting point and $\{x_n\}$ be the sequence given as in \eqref{eqn:alternating projection method}
constructed by the alternating projection method.
Then for any $n \in \mathbb{N}$, it holds that
\begin{equation}\label{eqn:maximum inequality}
\max \left\{d(x_n,A)^{2},d(x_n,B)^{2} \right\} \le \frac{1}{c_{M}}\left( d(x_n, A \cap B)^{2}-d(x_{n+1}, A \cap B)^{2}\right),
\end{equation}
where $c_{M}$ is given as in \eqref{eqn:convexity for CAt(k) space}.
\end{lemma}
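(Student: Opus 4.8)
The plan is to exploit the alternating structure of $\{x_n\}$ together with the sharpened projection inequality of Proposition \ref{propo:inequality of projection}, evaluated not at an arbitrary point of $A\cap B$ but at the optimal comparison point $c:=P_{A\cap B}(x_n)$. First I would note that since $A$ and $B$ are closed and convex, so is $A\cap B$; as $A\cap B\ne\emptyset$ and ${\rm diam}(M)<D_\kappa/2$, Proposition \ref{pro:properties of metric projection} guarantees that $P_{A\cap B}$ is well-defined. Setting $c=P_{A\cap B}(x_n)$ then gives the exact identity $d(x_n,c)=d(x_n,A\cap B)$, which is precisely the quantity appearing on the right-hand side of \eqref{eqn:maximum inequality}.

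Next I would reduce the maximum on the left-hand side to a single distance using the construction \eqref{eqn:alternating projection method}. For fixed $n\in\mathbb{N}$ the point $x_n$ lies in exactly one of the two sets by parity (odd indices land in $A$, positive even indices in $B$), so I may assume without loss of generality that $x_n\in A$, the case $x_n\in B$ being entirely symmetric. Then $d(x_n,A)=0$, while $x_{n+1}=P_B(x_n)$ yields $d(x_n,B)=d(x_n,x_{n+1})$; hence $\max\{d(x_n,A)^2,d(x_n,B)^2\}=d(x_n,x_{n+1})^2$.

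Finally I would apply Proposition \ref{propo:inequality of projection} with $C=B$, $x=x_n$, and $z=c$, which is legitimate since $c\in A\cap B\subseteq B$. This produces $c_M\,d(x_n,x_{n+1})^2\le d(x_n,A\cap B)^2-d(c,x_{n+1})^2$. Because $c\in A\cap B$, we have $d(c,x_{n+1})\ge d(x_{n+1},A\cap B)$, so discarding this term in the favorable direction gives $c_M\,d(x_n,x_{n+1})^2\le d(x_n,A\cap B)^2-d(x_{n+1},A\cap B)^2$. Combining with the reduction of the previous paragraph and dividing by $c_M$ yields exactly \eqref{eqn:maximum inequality}.

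The only genuine idea — and thus the step requiring care — is the choice $c=P_{A\cap B}(x_n)$ in place of an arbitrary element of $A\cap B$: this single choice simultaneously collapses $d(x_n,c)$ to $d(x_n,A\cap B)$ and, via membership $c\in A\cap B$, supplies the bound $d(c,x_{n+1})\ge d(x_{n+1},A\cap B)$ that turns the projection inequality into the desired telescoping form. I do not anticipate any analytic difficulty beyond confirming that $P_{A\cap B}$ exists, which the hypothesis ${\rm diam}(M)<D_\kappa/2$ secures through Proposition \ref{pro:properties of metric projection}.
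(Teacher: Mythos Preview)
Your proposal is correct and follows essentially the same route as the paper: assume without loss of generality $x_n\in A$, apply Proposition \ref{propo:inequality of projection} with $C=B$ at the point $z=P_{A\cap B}(x_n)$, and then use $d(P_{A\cap B}(x_n),x_{n+1})\ge d(x_{n+1},A\cap B)$ to obtain the telescoping right-hand side. The only cosmetic difference is that you first reduce $\max\{d(x_n,A)^2,d(x_n,B)^2\}$ to $d(x_n,x_{n+1})^2$ before invoking the projection inequality, whereas the paper rewrites $d(x_n,x_{n+1})=d(x_n,B)$ at the end; the logical content is identical.
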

\begin{proof}
The proof is a modification of the first part of the proof of Theorem 4.1 in \cite{BSS12}.
For fixed $n \in \mathbb{N}$, without loss of generality,
we assume that $x_n \in A \notin A \cap B$
and $x_{n+1}=P_{B}(x_n)\notin A \cap B$.
Then by using Proposition \ref{propo:inequality of projection},
for any $z\in B$, we have
\begin{align*}
d(x_n,z)^{2}
&\ge d(z,P_{B}(x_n))^{2}+c_{M}d(x_n,P_{B}(x_n))^{2}\\
&=d(z,x_{n+1})^{2}+c_{M}d(x_n,x_{n+1})^{2},
\end{align*}
from which, by taking $z=P_{A \cap B}(x_n)\in A \cap B\subset B$,
we have
\[
d(x_n, P_{A \cap B}(x_n))^2\ge d(P_{A \cap B}(x_n),x_{n+1})^2+c_{M}d(x_n, x_{n+1})^2.
\]
Therefore, by Proposition \ref{pro:properties of metric projection},
it holds that
\[
d(x_n, A \cap B)^2\ge d(A \cap B,x_{n+1})^2+c_{M} d(x_n,B)^2.
\]
Similarly, in case of $x_n \in B$,
\[
d(x_n, A \cap B)^2\ge d(A \cap B,x_{n+1})^2+c_{M} d(x_n,A)^2.
\]
Hence, the proof is completed.
\end{proof}

Now, we recall the notion of regularity of sets in metric spaces (see \cite{BSS12}).
Let $(M,d)$ be a metric space and $A,B$ be subsets of $M$.
Then we say that
\begin{itemize}
  \item [(i)]  $A$ and $B$ are \textit{boundedly regular}
if for any bounded subset $S \subseteq M$ and any $\epsilon >0$,
there exists $\delta >0$ such that
for any $x \in S$ and $\max\{d(x,A),d(x,B)\} \le \delta$,
\[
d(x, A \cap B) \le \epsilon;
\]
  \item [(ii)] $A$ and $B$ are \textit{boundedly linearly regular} if
for any bounded subset $S \subseteq M$, there exists $k >0$ such that for $x \in S$,
\[
d(x, A \cap B) \le k \max\{d(x,A),d(x,B)\};
\]
  \item [(iii)] $A$ and $B$ are \textit{linearly regular}
  if there exists $k>0$ such that for any $x\in M$,
\[
d(x, A \cap B) \le k \max\{d(x,A),d(x,B)\}.
\]
\end{itemize}

If the metric space $(M,d)$ is bounded,
then boundedly regular and boundedly linearly regular are said to be regular
and linearly regular, respectively.
Since a ${\rm CAT}(\kappa)$ space with ${\rm diam}(M)<D_{\kappa}/2$ is bounded,
the notions of boundedly linearly regular and linearly regular are same.

The next theorem is the main result in this section.
\begin{theorem}\label{thm:convergence of Al.pro}
Let $(M,d)$ be a complete ${\rm CAT}(\kappa)$ space with ${\rm diam}(M)<D_{\kappa}/2$.
Let $A$ and $B$ be non-empty convex closed subsets of $M$ with $A \cap B \neq\emptyset$.
Let $x_0$ be a starting point and $\{x_n\}$ be the sequence given as in \eqref{eqn:alternating projection method}
constructed by the alternating projection method.
Then the following properties hold:
\begin{itemize}
  \item [\rm{(i)}] $\{x_n\}$ $\Delta$-converges to a point $x \in A \cap B$.
  \item [\rm{(ii)}] If $A$ and $B$ are boundedly regular, then $\{x_n\}$ converges to a point $x \in A \cap B$.
  \item [\rm{(iii)}] If $A$ and $B$ are boundedly linearly regular, then $\{x_n\}$ converges linearly to a point $x \in A \cap B$.
\end{itemize}
\end{theorem}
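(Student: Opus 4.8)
The plan is to make part (i) the core of the argument and then read off (ii) and (iii) from the quantitative estimates already available. For (i) I would verify the hypotheses of Lemma \ref{lmm:criterion of Delta-convergence of Feer monotone} with $C = A \cap B$: Fej\'er monotonicity of $\{x_n\}$ with respect to $A \cap B$ is precisely Lemma \ref{lem:Alternation Projections is Fejer monotone sequence wrt AinB}, and the asymptotic radius bound $r(\{x_n\}) \le \mathrm{diam}(M) < D_\kappa/2$ is automatic; so the only substantive point is that every $\Delta$-cluster point of $\{x_n\}$ lies in $A \cap B$.

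To establish the cluster-point condition I would first show $d(x_n, A) \to 0$ and $d(x_n, B) \to 0$. By Proposition \ref{propo:properties of Fejer monotone sequence}(ii) the sequence $\{d(x_n, A\cap B)\}$ is non-increasing and bounded below, hence convergent, so $d(x_n, A\cap B)^2 - d(x_{n+1}, A\cap B)^2 \to 0$; feeding this into Lemma \ref{lem:maximum inequality} gives $\max\{d(x_n, A)^2, d(x_n, B)^2\} \to 0$. Now let $x$ be a $\Delta$-cluster point, with a subsequence $\{x_{n_k}\}$ $\Delta$-converging to $x$, and set $a_k := P_A(x_{n_k})$ (well defined since $d(x_{n_k}, A) \le \mathrm{diam}(M) < D_\kappa/2$). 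Because $d(x_{n_k}, a_k) = d(x_{n_k}, A) \to 0$, the triangle inequality yields $\limsup_k d(w, x_{n_k}) = \limsup_k d(w, a_k)$ for every $w \in M$, so $\{a_k\}$ shares the asymptotic centers of $\{x_{n_k}\}$ along every subsequence and therefore also $\Delta$-converges to $x$. Since $a_k \in A$ and $A$ is closed convex, Proposition \ref{propo:delta limit of xn is contained conv} forces $x \in \bigcap_j \overline{\mathrm{conv}}(\{a_j, a_{j+1}, \dots\}) \subseteq A$; the identical argument with $B$ gives $x \in B$. Hence every cluster point lies in $A \cap B$, and Lemma \ref{lmm:criterion of Delta-convergence of Feer monotone} proves (i).

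For (ii), since $\{x_n\}$ is bounded and $\max\{d(x_n, A), d(x_n, B)\} \to 0$, applying bounded regularity with $S = M$ forces $d(x_n, A\cap B) \to 0$, whereupon Proposition \ref{propo:properties of Fejer monotone sequence}(iii) gives strong convergence to a point of $A \cap B$. For (iii), I would combine Lemma \ref{lem:maximum inequality} with bounded linear regularity $d(x_n, A\cap B) \le k \max\{d(x_n, A), d(x_n, B)\}$ (here $k \ge 1$ since $A\cap B \subseteq A, B$). Writing $D_n := d(x_n, A\cap B)^2$, this chain gives $D_n \le \frac{k^2}{c_M}(D_n - D_{n+1})$, i.e. $D_{n+1} \le (1 - c_M/k^2) D_n$, and with $c_M \in (0,1)$ and $k \ge 1$ one has $1 - c_M/k^2 \in (0,1)$; thus $d(x_{n+1}, A\cap B) \le \sqrt{1 - c_M/k^2}\, d(x_n, A\cap B)$, and Proposition \ref{propo:properties of Fejer monotone sequence}(iv) delivers linear convergence.

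The main obstacle is the passage from ``$d(x_n, A) \to 0$ together with $x_{n_k} \to x$ in the $\Delta$-sense'' to ``$x \in A$'', because $\Delta$-limits are not obviously preserved by the nonlinear metric projection and closed convex sets are not manifestly $\Delta$-closed. The device that resolves it is transferring $\Delta$-convergence from $\{x_{n_k}\}$ to the projected sequence $\{a_k\} \subseteq A$ (justified by the coincidence of asymptotic radii once $d(x_{n_k}, a_k) \to 0$) and then localizing the $\Delta$-limit inside closed convex hulls via Proposition \ref{propo:delta limit of xn is contained conv}. Throughout, one must keep checking that the smallness conditions ($< D_\kappa/2$) demanded by the cited results are met, which they are because $\mathrm{diam}(M) < D_\kappa/2$ makes all asymptotic radii and projection distances admissible.
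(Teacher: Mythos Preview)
Your proof is correct and follows the same architecture as the paper's: establish Fej\'er monotonicity (Lemma \ref{lem:Alternation Projections is Fejer monotone sequence wrt AinB}), deduce $\max\{d(x_n,A),d(x_n,B)\}\to 0$ from Lemma \ref{lem:maximum inequality}, show every $\Delta$-cluster point lies in $A\cap B$, and conclude via Lemma \ref{lmm:criterion of Delta-convergence of Feer monotone}; parts (ii) and (iii) are then handled exactly as in the paper.

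The only substantive difference is in the cluster-point step of (i). The paper simply invokes Remark \ref{rem:weakly lsc at x for dC} together with \eqref{eqn:convergence of maixmaum of AB} to assert $d(x,A)=d(x,B)=0$, implicitly treating $d(\cdot,A)$ as $\Delta$-lower semicontinuous. You instead transfer $\Delta$-convergence from $\{x_{n_k}\}$ to the projected sequence $a_k=P_A(x_{n_k})\in A$ via the observation that $d(x_{n_k},a_k)\to 0$ forces identical asymptotic radii (hence identical asymptotic centers) along every subsequence, and then localize the $\Delta$-limit inside $A$ using the closed-convex-hull statement in Proposition \ref{propo:delta limit of xn is contained conv}. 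Your route is more explicit: Remark \ref{rem:weakly lsc at x for dC} literally gives $d(x,z)\le\liminf d(x_{n_k},z)$ only for \emph{fixed} $z$, and passing to the infimum over $z\in A$ goes the wrong way, so the paper's one-line citation leaves something to be filled in. Your projected-sequence device supplies exactly that missing justification. The minor caveat that $k\ge 1$ in (iii) (so that $1-c_M/k^2\in(0,1)$) is also a point the paper passes over silently; your remark handles it.
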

\begin{proof}
By Lemma \ref{lem:Alternation Projections is Fejer monotone sequence wrt AinB},
the sequence $\{x_n\}$ is Fej\'{e}r monotone with respect to $C:=A\cap B$.
Therefore, by (ii) in Proposition \ref{propo:properties of Fejer monotone sequence},
the sequence $\{d(x_n,C)\}$ is bounded and decreasing sequence in $\mathbb{R}$,
and so $\{d(x_n,C)\}$ converges to some point in $\mathbb{R}$.
Therefore, by \eqref{eqn:maximum inequality} in Lemma \ref{lem:maximum inequality},
we prove that
\begin{equation}\label{eqn:convergence of maixmaum of AB}
\max \left\{d(x_n,A),d(x_n,B) \right\} \rightarrow 0
\end{equation}
as $n \rightarrow \infty$.

(i) \enspace
Since $\{x_n\}$ is bounded with $r(\{x_n\})<D_{\kappa}/2$,
by (ii) in Proposition \ref{properties of bounded sequence},
$\{x_n\}$ has a $\Delta$-cluster point in $M$.
Let $x\in M$ be a $\Delta$-cluster point of $\{x_n\}$.
Then we take a subsequence $\{x_{n_k}\}$ of $\{x_n\}$ which $\Delta$-converges to $x$.
Then by Remark \ref{rem:weakly lsc at x for dC}
and \eqref{eqn:convergence of maixmaum of AB},
it holds that
\begin{align*}
d(x,A)=d(x,B)=0,
\end{align*}
which implies that $x\in A \cap B$.
Therefore, by Lemma \ref{lmm:criterion of Delta-convergence of Feer monotone},
we conclude that $\{x_n\}$ $\Delta$-converges to a point $x\in A \cap B$.

(ii) \enspace
Suppose that $A$ and $B$ are boundedly regular. Then since $\{x_n\}$ is a bounded sequence,
by \eqref{eqn:convergence of maixmaum of AB}, we see that
\[
d(x_n, A\cap B)\rightarrow 0
\]
as $n \rightarrow \infty$.
Therefore, by (iii) in Proposition \ref{propo:properties of Fejer monotone sequence},
$\{x_n\}$ converges to a point $x \in A\cap B$.

(iii) \enspace
Since $\{x_n\}$ is a bounded sequence,
and $A$ and $B$ are boundedly linearly regular,
there exists $k >0$ such that for all $n \in \mathbb{N}$,
\[
d(x_n, A \cap B) \le k \max\{d(x_n,A),d(x_n,B)\}.
\]
By \eqref{eqn:maximum inequality}, we have
\[
d(x_n, A \cap B)^2 \le \frac{k^2}{c_{M}} \left(d(x_n, A \cap B)^2-d(x_{n+1}, A \cap B)^2\right),
\]
which implies that
\[
d(x_{n+1}, A \cap B) \le \sqrt{1-\frac{c_{M}}{k^2}} d(x_n, A \cap B),
\]
where $c_{M}$ is given as in \eqref{eqn:convexity for CAt(k) space}.
Therefore, by (iv) in Proposition \ref{propo:properties of Fejer monotone sequence},
the proof of (iii) is completed.
\end{proof}

\begin{remark}
\upshape
As same as mentioned in Remark \ref{rem:to compare the result in [1]},
the result of Theorem 4.1 in \cite{ARLAN15}
similar to (i) in Theorem \ref{thm:convergence of Al.pro}
has been proved with the firmly non-expensiveness of maps.
\end{remark}

A metric space $(M,d)$ is said to be a \textit{boundedly compact}
if every bounded and closed subset of $M$ is compact.

\begin{corollary}\label{coro:strong convergence of Al.pro}
Let $(M,d)$ be a complete ${\rm CAT}(\kappa)$ space with ${\rm diam}(M)<D_{\kappa}/2$.
Let $A$ and $B$ be non-empty convex closed subsets of $M$ with $A \cap B \neq\emptyset$.
Let $x_0$ be a starting point and $\{x_n\}$ be the sequence given as in \eqref{eqn:alternating projection method}
constructed by the alternating projection method.
If $A$ or $B$ is boundedly compact,
then $\{x_n\}$ converges to a point $x \in A \cap B$.
\end{corollary}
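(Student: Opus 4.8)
The plan is to combine the $\Delta$-convergence already established in Theorem \ref{thm:convergence of Al.pro}(i) with the compactness hypothesis to upgrade $\Delta$-convergence to ordinary (strong) convergence. By Lemma \ref{lem:Alternation Projections is Fejer monotone sequence wrt AinB}, the sequence $\{x_n\}$ is Fej\'er monotone with respect to $C:=A\cap B$, hence bounded by Proposition \ref{propo:properties of Fejer monotone sequence}(i). In view of Proposition \ref{propo:properties of Fejer monotone sequence}(iii), the whole matter reduces to showing that $d(x_n,C)\to 0$ as $n\to\infty$; once this is known, the same proposition yields convergence of $\{x_n\}$ to a point $x\in C=A\cap B$.

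First I would record, exactly as in the proof of Theorem \ref{thm:convergence of Al.pro}, that $\{d(x_n,C)\}$ is a bounded decreasing sequence of reals and hence convergent, and that by Lemma \ref{lem:maximum inequality} together with this convergence one has
\begin{equation*}
\max\{d(x_n,A),d(x_n,B)\}\longrightarrow 0\qquad\text{as }n\to\infty.
\end{equation*}
Next I would exploit the compactness assumption. Suppose without loss of generality that $A$ is boundedly compact. Since $\{x_{2n-1}\}=\{P_A(x_{2n-2})\}\subseteq A$ and $\{x_n\}$ is bounded, the odd-indexed subsequence lies in a bounded subset of $A$, whose closure is compact; thus $\{x_{2n-1}\}$ has a subsequence $\{x_{2n_k-1}\}$ converging strongly to some $x\in A$. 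Because $d(x_{2n_k-1},B)\to 0$ and $B$ is closed, strong convergence forces $x\in B$ as well, so $x\in A\cap B=C$.

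The key remaining step is to feed this strongly convergent subsequence back into the Fej\'er machinery. Since $x\in C$, Fej\'er monotonicity gives that $\{d(x_n,x)\}$ is nonincreasing, hence convergent; but along the subsequence $d(x_{2n_k-1},x)\to 0$, so the limit of $\{d(x_n,x)\}$ is $0$, i.e.\ $x_n\to x$ strongly. Equivalently, one observes $d(x_n,C)\le d(x_n,x)\to 0$ and invokes Proposition \ref{propo:properties of Fejer monotone sequence}(iii). The main (though mild) obstacle is handling the parity bookkeeping: the compact set $A$ only captures the odd-indexed iterates directly, so one must either extract the subsequence from the correct sub-sequence as above or note that $d(x_n,x_{n+1})\to 0$ by Theorem \ref{thm:rate of asymptotic regularity of the alternating projection method} to pass the limit point to the full sequence; the symmetric argument covers the case where $B$ is boundedly compact instead.
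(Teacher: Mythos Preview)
Your proposal is correct, and in fact your execution takes a slightly more elementary route than the paper's. The paper first invokes Theorem~\ref{thm:convergence of Al.pro}(i) to obtain a $\Delta$-limit $x\in A\cap B$, then extracts a strongly convergent subsequence $x_{n_k}\to\widetilde x\in A$ from the odd iterates and observes that $\widetilde x$ is the asymptotic center of $\{x_{n_k}\}$; uniqueness of asymptotic centers (Proposition~\ref{properties of bounded sequence}(i)) forces $\widetilde x=x$, and Fej\'er monotonicity of $\{d(x_n,x)\}$ finishes the argument. You instead bypass the $\Delta$-convergence and asymptotic-center machinery entirely: once you have a strong subsequential limit $x\in A$, you use $d(x_{2n_k-1},B)\to 0$ together with closedness of $B$ to place $x$ directly in $A\cap B$, and then Fej\'er monotonicity upgrades subsequential to full convergence. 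Your route is more self-contained and avoids appealing to asymptotic centers; the paper's route has the conceptual advantage of identifying the strong limit with the $\Delta$-limit already produced by the main theorem. One small remark: your opening sentence announces that you will ``combine the $\Delta$-convergence'' with compactness, but your actual argument never uses the $\Delta$-limit---this is not a gap, just a mismatch between the stated plan and what you end up doing.
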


\begin{proof}
The proof is similar to the proof of the second part of Theorem 4.1 in \cite{ARLAN15}.
By (i) in Theorem \ref{thm:convergence of Al.pro},
$\{x_n\}$ $\Delta$-converges to a point $x \in A \cap B$.
Since the sequence $\{x_n\}$ is Fej\'{e}r monotone with respect to $A\cap B$,
the sequence $\{d(x_n,x)\}$ is bounded and decreasing in $\mathbb{R}$,
and so $\{d(x_n,x)\}$ converges to a point in $\mathbb{R}$.
Without loss of generality, we assume that $A$ is boundedly compact.
Then $A$ is a compact subset in $M$.
Therefore, there exists a subsequence $\{x_{n_k}\}$ of $\{x_{2n-1} \}\subset A$
such that $\{x_{n_k}\}$ converges to a point $\widetilde{x} \in A$.
Thus, we have
\[
\lim_{k \rightarrow \infty}d(x_{n_k},\widetilde{x})
=0
\le\lim_{k\rightarrow \infty}d(x_{n_k},z)\,\, \text{for all}\,\,z \in M,
\]
which implies that $\widetilde{x} \in A(\{x_{n_k}\})$.
By the uniqueness of the asymptotic center, we have $x=\widetilde{x}$.
Since $\{d(x_n,x)\}$ converges, $\{x_n\}$ converges to $x \in A \cap B$.
\end{proof}

By applying the Hopf-Rinow Theorem (see \cite{BH99})
and simple modifications of the proof of Corollary \ref{coro:strong convergence of Al.pro},
we have the following corollary.

\begin{corollary}{\rm c.f. \cite{ARLAN15}}
Let $(M,d)$ be a complete ${\rm CAT}(\kappa)$ space with ${\rm diam}(M)<D_{\kappa}/2$.
Let $A$ and $B$ be non-empty convex closed subsets of $M$ with $A \cap B \neq\emptyset$.
Let $x_0$ be a starting point and $\{x_n\}$
be the sequence given as in \eqref{eqn:alternating projection method}
constructed by the alternating projection method.
If $A$ or $B$ is locally compact, then
$\{x_n\}$ converges to a point $x \in A \cap B$.
\end{corollary}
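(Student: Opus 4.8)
The plan is to deduce this corollary from Corollary~\ref{coro:strong convergence of Al.pro} by promoting the local compactness of $A$ (or $B$) to genuine compactness, the bridge being the Hopf-Rinow theorem \cite{BH99}. Once $A$ is known to be compact, nothing new needs to be proved: the proof of Corollary~\ref{coro:strong convergence of Al.pro} only ever uses compactness of $A$, so it applies essentially word for word.

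The first task is therefore to arrange the hypotheses of Hopf-Rinow. I would note that $A$, being closed in the complete space $M$, is complete in the induced metric, and that $A$, being convex, is a geodesic space in its own right: since $\mathrm{diam}(M)<D_\kappa/2<D_\kappa$, any two points of $M$ are joined by a (unique) geodesic, and by convexity this geodesic lies in $A$ and is again a geodesic for the restricted metric. Thus $A$ is a complete length space. The Hopf-Rinow theorem then asserts that a complete, locally compact length space is proper, i.e.\ every bounded closed subset is compact. Applying this to the locally compact $A$ shows that $A$ is boundedly compact; and because $\mathrm{diam}(M)<D_\kappa/2$ forces $A$ to be bounded, $A$ itself is compact.

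With $A$ compact I would simply rerun the argument of Corollary~\ref{coro:strong convergence of Al.pro}. By Theorem~\ref{thm:convergence of Al.pro}(i) the sequence $\{x_n\}$ $\Delta$-converges to some $x\in A\cap B$; Fej\'{e}r monotonicity with respect to $A\cap B$ makes $\{d(x_n,x)\}$ non-increasing, hence convergent in $\mathbb{R}$; compactness of $A$ produces a subsequence of the odd-indexed iterates $\{x_{2n-1}\}\subset A$ converging strongly to some $\widetilde{x}\in A$, which is therefore the unique asymptotic center and must equal $x$; and since $\{d(x_n,x)\}$ converges while a subsequence of it tends to $0$, the whole sequence converges to $x\in A\cap B$. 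The only step that requires genuine care, and hence the main obstacle, is the verification that the restricted metric on $A$ is a length (geodesic) metric, since Hopf-Rinow fails for arbitrary metric subspaces; this is exactly where convexity of $A$ together with the existence of geodesics guaranteed by $\mathrm{diam}(M)<D_\kappa/2$ is essential.
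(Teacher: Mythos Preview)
Your proposal is correct and follows exactly the approach indicated by the paper: upgrade local compactness of $A$ (or $B$) to bounded compactness via the Hopf--Rinow theorem---using that $A$ is a complete geodesic (hence length) space in the induced metric, which in turn relies on the convexity of $A$ and the diameter bound $\mathrm{diam}(M)<D_\kappa/2$---and then invoke Corollary~\ref{coro:strong convergence of Al.pro}. You have also correctly isolated the one non-trivial verification, namely that the induced metric on $A$ is a length metric, which is precisely where convexity enters.
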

\begin{example}\rm
Let
\[
S=\left\{\left.\left[
                  \begin{array}{cc}
                    x & -y+iz \\
                    y+iz & x
                  \end{array}\right]\,\right|\, x,y,z \in \mathbb{R} \text{ with } x^2+y^2+z^2=1\right\}.
\]
Then we can identify $S \subseteq SU(2)$ with $\mathbb{S}^2$
by the map $\Phi:\mathbb{S}^2\to S$ defined as
\[
\Phi(x,y,z)= \left[
                  \begin{array}{cc}
                    x & -y+iz \\
                    y+iz & x
                  \end{array}\right].
\]
Consider the following six elements:
\begin{align*}
                 &M_{1}=\left[
                 \begin{array}{cc}
                    0 & i \\
                    i & 0
                  \end{array}\right],\qquad
                  M_{2}=\left[ \begin{array}{cc}
                    \frac{1}{2} & i\frac{\sqrt{3}}{2} \\
                    i\frac{\sqrt{3}}{2} & \frac{1}{2}
                  \end{array}
                \right],\qquad
               M_{3}=\left[
                  \begin{array}{cc}
                    0 & -\frac{1}{2}+i \frac{\sqrt{3}}{2} \\
                    \frac{1}{2}+i \frac{\sqrt{3}}{2} & 0 \\
                  \end{array}
                \right],\\
&M_{4}=\left[
                  \begin{array}{cc}
                    \frac{1}{3} & i\frac{2\sqrt{2}}{3} \\
                    i\frac{2\sqrt{2}}{3} & \frac{1}{3}
                  \end{array}\right],
M_{5}=                  \left[ \begin{array}{cc}
                    \frac{2}{3} & -\frac{1}{3} + i\frac{2}{3} \\
                    \frac{1}{3} + i\frac{2}{3} & \frac{2}{3}
                  \end{array}
                \right],
M_{6}=\left[
                  \begin{array}{cc}
                  \frac{2}{3} & -\frac{2}{3} + i\frac{1}{3}\\
                    \frac{2}{3} + i\frac{1}{3} & \frac{2}{3} \\
                  \end{array}
                \right]
\end{align*}
of $S$.
Then the sets
\begin{align*}
  A&=\overline{\rm{conv}}\left\{(0,0,1),(1/2,0,\sqrt{3}/2),(0,1/2,\sqrt{3}/2)\in \mathbb{S}^2 \right\},\\
  B&=\overline{\rm{conv}}\left\{(1/3,0,2\sqrt{2}/3),(2/3,1/3,2/3),(2/3,2/3,1/3)\in \mathbb{S}^2 \right\}
\end{align*}
can be considered as closed bounded convex sets generated by the subsets
\begin{align*}
  \widetilde{A}=\left\{M_{1},M_{2},M_{3}\right\}\quad \text{and}\quad
  \widetilde{B}=\left\{M_{4},M_{5},M_{6}\right\}
\end{align*}
of $S \subseteq SU(2)$, respectively,
and it is easy to see that the point $(1/3,0,2\sqrt{2}/3)$
is in the geodesic joining $(0,0,1)$ and $(1/2,0,\sqrt{3}/2)$
and so $A \cap B \neq\emptyset$.
Since the $2$-dimensional sphere $\mathbb{S}^2$ is a compact complete metric space,
$A$ and $B$ are compact subset in $\mathbb{S}^2$.
Thus, by Corollary \ref{coro:strong convergence of Al.pro},
the alternating sequence $\{x_n\}$ given as in \eqref{eqn:alternating projection method}
converges to a point $x \in A \cap B$.
Therefore, the sequence $\{\Phi(x_n)\}\subseteq SU(2)$ converges to a point $\Phi(x) \in \Phi(A) \cap \Phi(B) $
in the sense of a ${\rm CAT}(1)$ space.
\end{example}

\section*{Acknowledgments}
UCJ was supported by Basic Science Research Program
through the NRF funded by the MEST (NRF-2016R1D1A1B01008782).
YL was supported by the National
Research Foundation of Korea (NRF) grant funded by the Korea
government(MEST) (No.2015R1A3A 2031159).



\begin{thebibliography}{00}
\bibitem{AHS80}
E. M. Alfsen, H. Hanche-Olsen and  F. W. Shultz,
\textit{State spaces of $C^{*}$-algebras,}
Acta Math.  \textbf{144} (1980), 267--305.

\bibitem{AS2001}
E. M. Alfsen and F. W. Shultz,
``State spaces of operator algebras. Basic theory, orientations, and $C^{*}$-products,"
Mathematics: Theory \& Applications. Birkh\"{a}user Boston, Inc., Boston, MA, 2001.

\bibitem{ARLAN15}
D. Ariza-Ruiz, G. L\'{o}pez-Acedo and A. Nicolae,
\textit{The asymptotic behavior of the composition of firmly nonexpansive mappings,}
J. Optim. Theory Appl. \textbf{167} (2015), 409–-429.

\bibitem{BSS12}
M. Ba\v{c}\'{a}k, I. Searston and B. Sims,
\textit{Alternating projections in ${\rm CAT}(0)$ spaces,}
J. Math. Anal. Appl. \textbf{385} (2012), 599–-607.

\bibitem{BB96}
H. H. Bauschke and J. M. Borwein,
\textit{On projection algorithms for solving convex feasibility problems,}
SIAM Rev. \textbf{38} (1996), 367-–426.

\bibitem{BMR04}
H. H. Bauschke, E. Matou\v{s}kov\'{a} and S. Reich,
\textit{Projection and proximal point methods: convergence results and counterexamples,}
Nonlinear Anal. \textbf{56} (2004), 715-–738.

\bibitem{Bhatia07}
R. Bhatia,
``Positive Definite Matrices,"
Princeton Series in Applied Mathematics. Princeton University Press, Princeton, NJ, 2007.

\bibitem{Bregman65}
L. M. Br\`{e}gman,
\textit{Finding the common point of convex sets by the method of successive projection,}
Dokl. Akad. Nauk SSSR \textbf{162} (1965), pp. 487–-490 (in Russian);
English transl. in Sov. Math. Dokl. \textbf{6} (1965), 688–-692.

\bibitem{BH99}
M. R. Bridson and A. Haefliger,
``Metric Spaces of Non-positive Curvature,"
Fundamental Principles of Mathematical Sciences, \textbf{319} Springer-Verlag, Berlin, 1999.

\bibitem{Browder67}
F. E. Browder,
\textit{Convergence theorems for sequences of nonlinear operators in Banach spaces,}
Math. Z. \textbf{100} (1967), 201--225.

\bibitem{Bruck73}
R. E. Bruck,
\textit{Nonexpansive projections on subsets of Banach spaces,}
Pacific J. Math. \textbf{47} (1973), 341--355.

\bibitem{Combettes97}
P. L. Combettes,
\textit{Hilbertian convex feasibility problem: convergence of projection methods,}
Appl. Math. Optim. \textbf{35} (1997), 311-–330.


\bibitem{EFL09}
R. Esp\'{i}nola and A. Fern\'{a}ndez-Le\'{o}n,
\textit{${\rm CAT}(\kappa)$-spaces, weak convergence and fixed points,}
J. Math. Anal. Appl.  \textbf{353}  (2009), 410–-427.


\bibitem{Gromov87}
M. Gromov,
\textit{Hyperbolic groups, in: Essays in group theory,}
Math. Sci. Res. Inst. Publ. \textbf{8} (1987), 75--263.

\bibitem{Gromov2001}
M. Gromov,
\textit{${\rm CAT}(\kappa)$-spaces: construction and concentration,}
Zap. Nauchn. Sem. S.-Peterburg. Otdel. Mat. Inst. Steklov.  \textbf{280}  (2001),
Geom. i Topol. \textbf{7}, 100--140, 299--300;
reprinted in  J. Math. Sci. \textbf{119}  (2004),  178-–200.


\bibitem{HFLL12}
J. S. He, D. H. Fang, G. L\'{o}pez and C. Li,
\textit{Mann's algorithm for nonexpansive mappings in ${\rm CAT}(\kappa)$ spaces,}
Nonlinear Anal. \textbf{75} (2012), 445-–452.

\bibitem{Hundal04}
H. S. Hundal,
\textit{An alternating projection that does not converge in norm, }
Nonlinear Anal. \textbf{57} (2004), 35-–61.

\bibitem{KP08}
W. A. Kirk and B. Panyanak,
\textit{A concept of convergence in geodesic spaces,}
Nonlinear Anal. \textbf{68} (2008), 3689-–3696.

\bibitem{Kuwae14}
K. Kuwae,
\textit{Jensen's inequality on convex spaces,}
Calc. Var. Partial Differential Equations \textbf{49} (2014), 1359–-1378.

\bibitem{Lim76}
T. C. Lim,
\textit{Remarks on some fixed point theorems,}
Proc. Amer. Math. Soc. \textbf{60} (1976), 179-–182.

\bibitem{Nicolae13}
A. Nicolae,
\textit{Asymptotic behavior of averaged and firmly nonexpansive mappings in geodesic spaces,}
Nonlinear Anal. \textbf{87} (2013), 102–-115.

\bibitem{Ohta07}
S. Ohta,
\textit{Convexities of metric spaces,}
Geom. Dedicata \textbf{125} (2007), 225–-250.

\bibitem{Sosov04}
E. N. Sosov,
\textit{On analogues of weak convergence in a special metric space,}
Izv. Vyssh. Uchebn. Zaved. Mat. (2004), pp. 84--89 (in Russian);
English transl. in Russian Math. (Iz. VUZ) \textbf{48} (2004), 79–-83.

\bibitem{Neumann50}
J. von Neumann,
\textit{Functional Operators II: The Geometry of Orthogonal Spaces,}
Annals of Mathematics Studies \textbf{22},
Princeton University Press, Princeton, NJ, 1950
(This is a reprint of mimeographed lecture notes first distributed in 1933).

\bibitem{Wallach72}
N. Wallach,
\textit{Compact homogeneous Riemannian manifolds with strictly positive curvature,}
Ann. of Math. \textbf{96} (1972), 277–-295.

\bibitem{Zarikian2006}
V. Zarikian,
\textit{Alternating-projection algorithms for operator-theoretic calculations,}
Linear Algebra Appl.  \textbf{419}  (2006),  710–-734.


\end{thebibliography}
\end{document}